\newtheorem{theorem}{Theorem}[section]
\newtheorem{proposition}[theorem]{Proposition}
\newtheorem{problem}[theorem]{Problem}
\newtheorem{corollary}[theorem]{Corollary}
\newtheorem{conjecture}[theorem]{Conjecture}
\begin{document}

\title{Cores, joins and the Fano-flow conjectures}

\author{Ligang Jin\thanks{
       Paderborn Institute for Advanced Studies in
		Computer Science and Engineering,
		Paderborn University,
		Warburger Str. 100,
		33102 Paderborn,
		Germany;
       Supported by Deutsche Forschungsgemeinschaft (DFG) grant STE 792/2-1;
       ligang@mail.upb.de},
       Giuseppe Mazzuoccolo\thanks{
       Universit\`{a} di Modena e Reggio Emilia,
        Dipartimento di Scienze Fisiche,
        Informatiche e Matematiche,
        Via Campi 213/b,
        41125 Modena,
        Italy;
       Research performed within the project PRIN 2012 ``Strutture Geometriche, Combinatoria e loro Applicazioni'' of the Italian Ministry MIUR; mazzuoccolo.giuseppe@unimore.it},
       Eckhard Steffen\thanks{
       Paderborn Institute for Advanced Studies in
		Computer Science and Engineering,
		Paderborn University,
		Warburger Str. 100,
		33102 Paderborn,
		Germany;
       es@upb.de}}

\maketitle

\begin{abstract}
{\small The Fan-Raspaud Conjecture states that every bridgeless cubic graph has three 1-factors with empty intersection.
A weaker one than this conjecture is that every bridgeless cubic graph has two 1-factors and one join with empty intersection.
Both of these two conjectures can be related to conjectures on Fano-flows.
In this paper, we show that these two conjectures are equivalent to some statements on cores and weak cores of a bridgeless cubic graph.
In particular, we prove that the Fan-Raspaud Conjecture is equivalent to a conjecture proposed in [E. Steffen, 1-factor and cycle covers of cubic graphs, J. Graph Theory 78 (2015) 195-206].
Furthermore, we  disprove a conjecture proposed in [G. Mazzuoccolo, New conjectures on perfect matchings in cubic graphs, Electron. Notes Discrete Math. 40 (2013) 235-238] and we propose a new version of it under a stronger connectivity assumption.
The weak oddness of a cubic graph $G$ is the minimum number of odd components in the complement of a join of $G$.
We obtain an upper bound of weak oddness in terms of weak cores, and thus an upper bound of oddness in terms of cores as a by-product.}
\end{abstract}

\section{Introduction}
We study 1-factors (i.e., perfect matchings) in cubic graphs. If $G$ is a graph, then $V(G)$ and $E(G)$ denote its vertex set and edge set, respectively.
In 1994, the following statement was conjectured to be true by Fan and Raspaud:
\begin{conjecture} [\cite{FanRaspaud1994133}] \label{conj_FanRaspaud}
Every bridgeless cubic graph has three 1-factors $M_1, M_2, M_3$ such that $M_1\cap M_2\cap M_3 = \emptyset$.
\end{conjecture}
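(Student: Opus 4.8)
The plan is to separate the trivial part from the real content, carry out the standard reductions, and then identify exactly where an honest attempt stalls; since Conjecture~\ref{conj_FanRaspaud} is a celebrated open problem I describe a line of attack together with the obstruction, not a complete proof. First observe that if $G$ is $3$-edge-colorable then any proper $3$-edge-coloring already gives three $1$-factors $M_1,M_2,M_3$ that partition $E(G)$, so their pairwise and hence triple intersection is empty and there is nothing to prove. Thus only the class-two graphs (snarks) matter, and I would first apply the usual gluing arguments across $2$- and $3$-edge-cuts---cut $G$, solve the smaller cubic pieces, and recombine the chosen matchings so that the resulting triple still misses every edge---to reduce the statement to cyclically $4$-edge-connected snarks, removing the easy structural obstructions.

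The second step is the flow reformulation that names the paper. Given three $1$-factors $M_1,M_2,M_3$, set $\phi(e)=(\mathbf{1}_{M_1}(e),\mathbf{1}_{M_2}(e),\mathbf{1}_{M_3}(e))\in\mathbb{Z}_2^3$; since each $M_i$ meets every vertex exactly once, the three values around each vertex sum coordinatewise to $(1,1,1)$, and $M_1\cap M_2\cap M_3=\emptyset$ is precisely the requirement $\phi(e)\neq(1,1,1)$ for every edge $e$. Conversely any $\phi\colon E(G)\to\mathbb{Z}_2^3$ whose vertex sums all equal $(1,1,1)$ splits into three joins $J_i=\{e:\phi_i(e)=1\}$, and $\phi(e)\neq(1,1,1)$ says $J_1\cap J_2\cap J_3=\emptyset$. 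Here the weakest version is already a theorem: starting from a nowhere-zero $\mathbb{Z}_2^3$-flow $f$ (which exists on every bridgeless graph by the $6$-flow theorem) and setting $\phi=f+(1,1,1)$, the vertex sums become $0+3(1,1,1)=(1,1,1)$ while $\phi(e)=(1,1,1)$ would force $f(e)=0$, impossible; so three joins with empty intersection always exist. The content of Conjecture~\ref{conj_FanRaspaud} is therefore the demand that the three joins be genuine $1$-factors, i.e.\ that each $J_i$ have degree exactly one at every vertex.

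The whole difficulty is this upgrade from joins to $1$-factors, and this is where I expect the attempt to stall. A join has vertices of degree $1$ or $3$; forcing degree $1$ everywhere, simultaneously in all three subgraphs and while still vacating the forbidden value $(1,1,1)$, is exactly what no elementary modification of $\phi$ controls, because changing one coordinate to repair a degree-$3$ vertex tends to re-create the forbidden value elsewhere. Concretely I would fix $M_1$ by Petersen's theorem, so that $G-M_1$ is a $2$-factor, and try to build $M_2,M_3$ from the cycle structure of that $2$-factor; the obstruction then concentrates precisely on the edges that are forced into (almost) every $1$-factor, which is what the core and the weak core of $G$ are designed to measure. The realistic plan---and the route the abstract signals---is therefore not to exhibit three matchings directly but to localize the problem to the core and settle the equivalent statement there. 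I expect the core case to be genuinely irreducible with present techniques: there is no known handle on the $1$-factor structure of an arbitrary cyclically $4$-edge-connected snark tight enough to finish, which is exactly why Conjecture~\ref{conj_FanRaspaud} remains open and why the paper develops equivalences rather than a proof.
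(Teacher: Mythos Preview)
Your assessment is correct: Conjecture~\ref{conj_FanRaspaud} is stated in the paper as an open conjecture, not as a theorem, and the paper does \emph{not} prove it. The paper's contribution is precisely what you anticipate in your last paragraph---it establishes that the Fan--Raspaud Conjecture is equivalent to several a priori weaker-looking statements (bipartite core, triangle-free core, acyclic complement of the union of three $1$-factors), via the $2$-cut connection trick with copies of $K_4$. Your reduction outline and your identification of the join-to-$1$-factor upgrade as the essential obstruction are accurate and match the paper's framing; the only minor slip is that the nowhere-zero $\mathbb{Z}_2^3$-flow you invoke comes from Jaeger's $8$-flow theorem (which the paper cites) rather than the $6$-flow theorem, though of course either suffices.
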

We remark that Conjecture \ref{conj_FanRaspaud} is implied by the celebrated Berge-Fulkerson Conjecture \cite{Fulkerson1971168}, which states
that every bridgeless cubic graph has six 1-factors such that each edge is contained in precisely two of them.

The study of Conjecture \ref{conj_FanRaspaud} leads to a deep analysis of Fano-flows on graphs. Consider the Fano plane $\cal{F}$$_7$ with the points labeled with the seven non-zero elements of $\mathbb{Z}^3_2$ as drawn in Figure \ref{fig_Fano}.
\begin{figure}[ht]
  \centering
  % Requires \usepackage{graphicx}
 \includegraphics[width=5cm]{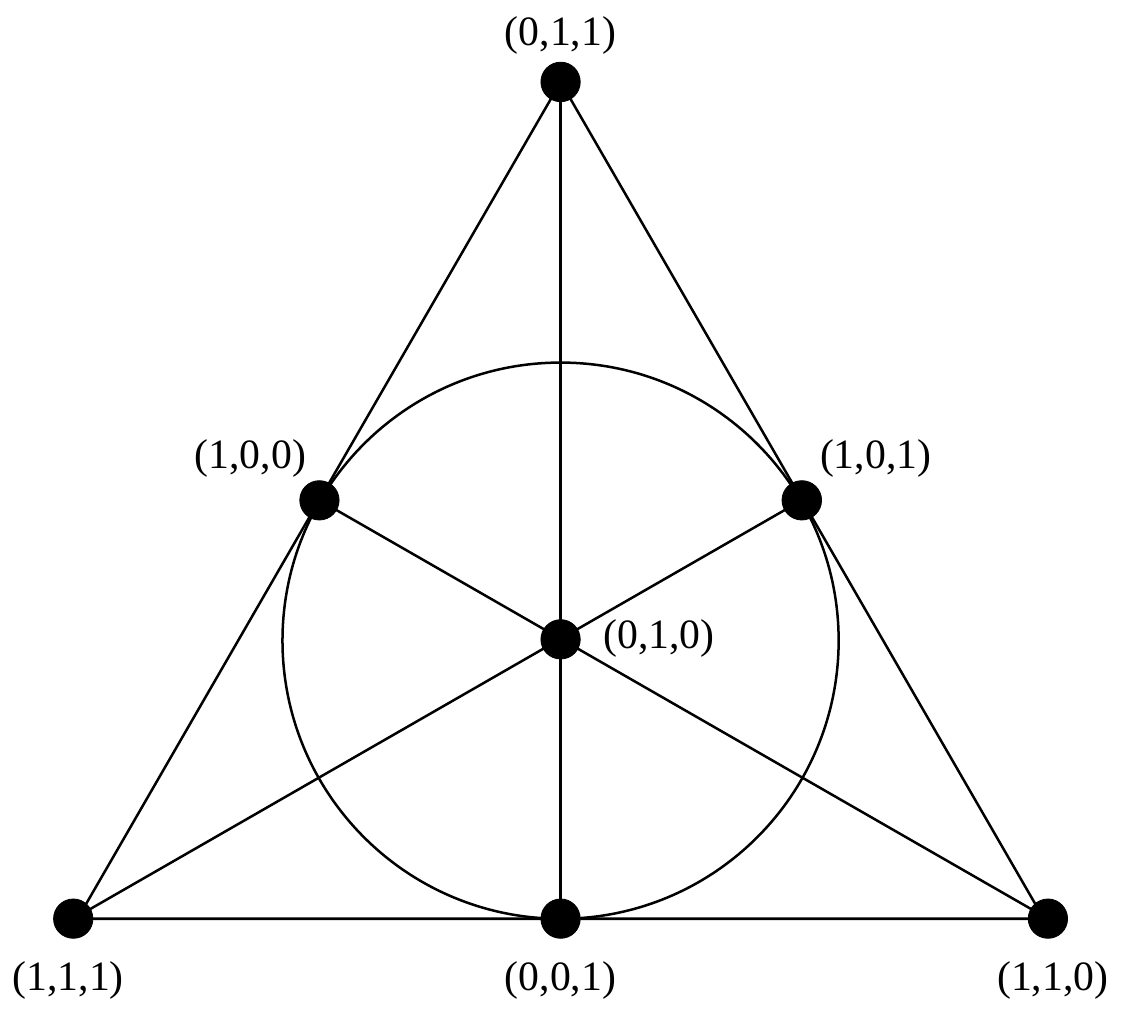}\\
  \caption{Fano plane $\cal{F}$$_7$}\label{fig_Fano}
\end{figure}
Clearly, if a cubic graph has a nowhere-zero $\mathbb{Z}^3_2$-flow, then for every vertex the flow values on its adjacent edges are pairwise different and they lie on a line of the Fano plane. Thus, every bridgeless cubic graph has a nowhere-zero Fano-flow by Jaeger's 8-flow Theorem \cite{Jaeger1979205}. However, it might be that not all possible combinations of three elements of $\mathbb{Z}^3_2$ appear at a vertex of $G$. For $k\leq 7$, a $k$-line Fano-flow is a Fano-flow of $G$ where at most $k$ lines of $\cal{F}$$_7$ appear as flow values at the vertices of $G$. Clearly, a 3-edge-colorable cubic graph has a 1-line Fano-flow. M\'{a}\v{c}ajov\'{a} and \v{S}koviera \cite{MacajovaSkoviera2005112} proved that each Fano-flow of a bridgeless cubic class 2 graph needs all seven points and at least four lines of the Fano plane. Furthermore, they proved that every bridgeless cubic graph has a 6-line Fano-flow, and conjectured that 4 lines are sufficient.

\begin{conjecture} [\cite{MacajovaSkoviera2005112}] \label{conj_4Fano}

Every bridgeless cubic graph has a 4-line Fano-flow.
\end{conjecture}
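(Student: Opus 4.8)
The plan is to deduce Conjecture~\ref{conj_4Fano} from the Fan--Raspaud Conjecture (Conjecture~\ref{conj_FanRaspaud}) by making the dictionary between triples of 1-factors and Fano-flows explicit. Suppose $G$ has three 1-factors $M_1,M_2,M_3$ with $M_1\cap M_2\cap M_3=\emptyset$, and assign to each edge $e$ the vector $c(e)\in\mathbb{Z}^3_2$ whose $i$-th coordinate equals $0$ exactly when $e\in M_i$. Because each $M_i$ meets every vertex in precisely one of its three incident edges, at each vertex the $i$-th coordinate vanishes on exactly one edge; hence each coordinate sum is $0+1+1\equiv 0$ and the three vectors at the vertex sum to $0$, so $c$ obeys Kirchhoff's law. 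Moreover $c(e)=0$ would force $e\in M_1\cap M_2\cap M_3$, which is excluded, so $c$ is nowhere zero. Thus $c$ is a Fano-flow of $G$.

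The next step is to bound the number of lines it uses. At each vertex the three 1-factors distribute among the three incident edges either as $(1,1,1)$ or as $(2,1,0)$. In the first case the three values are the three weight-two vectors of $\mathbb{Z}^3_2$, which constitute the single fixed line $L_0=\{(0,1,1),(1,0,1),(1,1,0)\}$. In the second case the values are one weight-one vector, one weight-two vector and the all-ones vector $(1,1,1)$, and any such triple is one of the three lines of the Fano plane through the point $(1,1,1)$. Consequently every Fano-flow produced by this construction uses only the four lines $L_0$ and the three lines through $(1,1,1)$. Therefore Conjecture~\ref{conj_FanRaspaud} implies Conjecture~\ref{conj_4Fano}, and it suffices to produce the required triple of 1-factors.

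The main obstacle is precisely that this reduction lands on the Fan--Raspaud Conjecture, which is itself open; since every class~2 graph already forces at least four lines by the result of M\'{a}\v{c}ajov\'{a} and \v{S}koviera, the bound of four is tight and no argument leaving slack can succeed, so one must exhibit the 1-factors exactly. To attack the remaining content I would exploit the structure theory of bridgeless cubic graphs: 3-edge-colorable graphs already carry a 1-line Fano-flow, and I would try to reduce the general case along $2$- and $3$-edge-cuts to the cyclically $4$-edge-connected case, the nontrivial instances being snarks, where the three 1-factors must be constructed directly. It is here that the reformulations in terms of cores and weak cores announced in the abstract become the natural tool: the strategy would be to translate ``three 1-factors with empty intersection'' into a covering condition on a core of $G$ and to establish that condition inductively. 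Carrying out that inductive step for snarks is the genuinely hard part.
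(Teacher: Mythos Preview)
The statement you are addressing is an \emph{open conjecture}; the paper does not prove it and does not claim to. What the paper contributes is a list of equivalent reformulations (its second theorem in Section~3): Conjecture~\ref{conj_4Fano} is equivalent to Fan--Raspaud (Conjecture~\ref{conj_FanRaspaud}), to the existence of a bipartite core, a triangle-free core, and to the acyclicity condition of Conjecture~\ref{conj_3acyclic}. The equivalence of (1) and (2) is not even new there; it is attributed to M\'{a}\v{c}ajov\'{a} and \v{S}koviera \cite{MacajovaSkoviera2005112}.

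Your first two paragraphs correctly and cleanly reprove the implication Fan--Raspaud $\Rightarrow$ 4-line Fano-flow, which is one direction of that known equivalence. Your third paragraph is an honest acknowledgement that you have reduced one open problem to another. That is accurate, but it means your proposal is not a proof: the ``inductive step for snarks'' that you identify as the hard part is exactly the content of the conjecture, and neither you nor the paper supplies it. There is no gap to flag in what you wrote, because you do not overclaim; but there is also nothing to compare against a proof in the paper, since none exists.
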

A natural relaxation of Conjecture \ref{conj_4Fano} is the following conjecture:

\begin{conjecture} [\cite{MacajovaSkoviera2005112}] \label{conj_5Fano}
Every bridgeless cubic graph has a 5-line Fano-flow.
\end{conjecture}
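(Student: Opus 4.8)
The plan is to pass from the language of flows to the language of perfect matchings and joins, settle the statement there, and translate back. Recall that a nowhere-zero $\mathbb{Z}_2^3$-flow on a cubic graph $G$ is the same datum as an ordered triple of even subgraphs $C_1,C_2,C_3$ whose union is $E(G)$: the $i$-th coordinate of the flow value of an edge $e$ records whether $e\in C_i$, flow conservation makes each $C_i$ an even subgraph, and the nowhere-zero condition is exactly $C_1\cup C_2\cup C_3=E(G)$. Writing $J_i=E(G)\setminus C_i$ for the complementary joins, this reads $J_1\cap J_2\cap J_3=\emptyset$. First I would compute, vertex by vertex, which lines of $\mathcal{F}_7$ can occur when $J_1$ and $J_2$ are \emph{1-factors} (degree $1$ at every vertex) while $J_3$ is an arbitrary join (degree $1$ or $3$ at every vertex). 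A short case analysis on the three incident edges shows that only five of the seven lines arise---the line with all weight-two points, the three lines through $111$, and the weight-$(1,1,2)$ line whose points all have third coordinate $0$---so that two 1-factors and one join with empty intersection automatically produce a $5$-line Fano-flow. Hence it suffices to prove: every bridgeless cubic graph has two 1-factors $M_1,M_2$ and a join $J_3$ with $M_1\cap M_2\cap J_3=\emptyset$.

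Next I would reformulate the existence of $J_3$. Given $M_1,M_2$, a join $J_3$ disjoint from $A:=M_1\cap M_2$ is precisely the complement of an even subgraph $C_3\supseteq A$. Since every even subgraph meets every edge-cut in an even number of edges, a set $A$ is contained in some even subgraph if and only if $A$ contains no edge-cut of odd size. (Extending $A$ amounts to finding a $T$-join in $G-A$, where $T$ is the set of endpoints of $A$; the only obstruction is an odd cut buried inside $A$.) Because $A$ is a matching and, in a cubic graph, the parity of an edge-cut $\partial S$ equals the parity of $|S|$, the dangerous configurations are odd cuts---principally the nontrivial $3$-edge-cuts---lying entirely inside $M_1\cap M_2$. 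Thus the whole problem reduces to choosing two 1-factors whose common edges contain no odd edge-cut.

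For the last step I would start from any 1-factor $M_1$ (which exists, and is far from unique, by Petersen's theorem) and try to choose $M_2$ so that every odd edge-cut contained in $M_1$ is broken, i.e.\ loses at least one edge in $M_2$. The parity relation $|\partial S\cap M|\equiv|S|\pmod 2$ shows that a 1-factor crossing a $3$-edge-cut in all three edges can, after a suitable alternating-path (Kempe-type) exchange along the cut, be replaced by one crossing it in a single edge, which is what one wants. The natural organizing tool is the decomposition of $G$ along its $2$- and $3$-edge-cuts into cyclically $4$-edge-connected pieces, handling the cut edges separately and running the argument inside each piece. The main obstacle is exactly this final step: making the choices for $M_2$ consistent across \emph{all} odd cuts simultaneously, since repairing one cut may spoil another---this is precisely the content of the weaker Fan-Raspaud-type statement and is where the genuine difficulty lies. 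Should the direct construction resist, a possible fallback is to bootstrap from the known $6$-line Fano-flow of M\'{a}\v{c}ajov\'{a} and \v{S}koviera and show that two of its six lines can always be merged by flipping the flow along an even subgraph without creating a new line.
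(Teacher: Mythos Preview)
The statement you are trying to prove is listed in the paper as an \emph{open conjecture}, not a theorem; the paper does not prove it. What the paper does prove (Theorem~\ref{thm_equivalence}) is that Conjecture~\ref{conj_5Fano} is equivalent to Conjecture~\ref{conj_joinFR} (two 1-factors and one join with empty intersection) and to two further statements about cyclic and triangle-free simple $1$-weak cores. Your first paragraph correctly recovers one direction of that equivalence---that two 1-factors $M_1,M_2$ and a join $J_3$ with $M_1\cap M_2\cap J_3=\emptyset$ yield a $5$-line Fano-flow---and your second paragraph gives a correct reformulation of the existence of $J_3$ in terms of $M_1\cap M_2$ containing no odd edge-cut. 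Up to here you are on solid ground and in agreement with the paper.

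The gap is in the third paragraph, and you name it yourself: you have no argument that one can choose $M_2$ so that $M_1\cap M_2$ avoids \emph{all} odd edge-cuts simultaneously. Kempe-type exchanges along a single $3$-edge-cut, or a decomposition along $2$- and $3$-edge-cuts into cyclically $4$-edge-connected pieces, do not by themselves settle this, because the alternating-path modifications for different cuts can interfere, and inside a cyclically $4$-edge-connected piece there is nothing left to decompose while the problem is still nontrivial (the Petersen graph already shows that $M_1\cap M_2$ is forced to be nonempty). Your proposed fallback---starting from the known $6$-line Fano-flow and ``merging'' two lines by flipping along an even subgraph---is not a proof either: you would have to exhibit such an even subgraph in every bridgeless cubic graph, which is again the heart of the matter. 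In short, you have rederived the equivalent formulation the paper states, but the conjecture itself remains unproved in your write-up, exactly as it does in the paper.
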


Let $H$ be a graph. If either $X\subseteq V(H)$ or $X\subseteq E(H)$, then $H[X]$ denotes the subgraph of $H$ induced by $X$.
A join of $H$ is a set $J$ of edges such that the degrees of every vertex have the same parity in $H$ and $H[J]$. If there is no harm of
confusion we use $J$ instead of $H[J]$.

Conjectures \ref{conj_4Fano} and \ref{conj_5Fano} have surprisingly counterparts in terms of 1-factors.
M\'{a}\v{c}ajov\'{a} and \v{S}koviera \cite{MacajovaSkoviera2005112} proved that Conjecture \ref{conj_4Fano} is equivalent to Conjecture \ref{conj_FanRaspaud}.
Analogously, one can easily obtain the equivalence between Conjecture \ref{conj_5Fano} and Conjecture \ref{conj_joinFR}, and the one between 6-line Fano-flow theorem and Proposition \ref{pro_1M2J}.

\begin{conjecture} \label{conj_joinFR}
Every bridgeless cubic graph has two 1-factors $M_1, M_2$ and a join
$J$ such that $M_1\cap M_2\cap J =\emptyset$.
\end{conjecture}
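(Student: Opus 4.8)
The plan is to work entirely within the language of 1-factors and joins, avoiding any passage through Fano-flows. First I would record the elementary fact that a graph $H$ admits a spanning subgraph in which every vertex has odd degree if and only if every connected component of $H$ has even order: the forward direction is a handshake/parity count, and the converse follows by taking in each component a $T$-join with $T$ the whole (even) vertex set. Applied to a cubic graph $G$, a join is precisely a spanning subgraph with all degrees odd, so a join $J$ with $J\cap S=\emptyset$ for a prescribed edge set $S$ exists exactly when $G-S$ has all components of even order. Since for any two 1-factors $M_1,M_2$ the set $S=M_1\cap M_2$ is a matching and the condition $M_1\cap M_2\cap J=\emptyset$ is just $J\cap S=\emptyset$, Conjecture \ref{conj_joinFR} is equivalent to the following target, which I take as the working statement: \emph{every bridgeless cubic graph has two 1-factors $M_1,M_2$ such that $G-(M_1\cap M_2)$ has all components of even order.}

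Next I would dispose of the easy regimes. If $G$ is 3-edge-colorable then it has two disjoint 1-factors, so $S=\emptyset$ and any join (for instance a third 1-factor) finishes; hence only class 2 graphs are at issue, and there every two 1-factors meet in at least one edge, forcing $S\neq\emptyset$. The favourable sub-case is a \emph{small} intersection: removing a matching of size at most two from a bridgeless cubic graph can only create a $2$-edge-cut, and a $2$-edge-cut of a cubic graph always splits it into two parts of even order (a parity count on either side), so $G-(M_1\cap M_2)$ has all even components whenever $|M_1\cap M_2|\le 2$. This already settles, for example, the Petersen graph, where any two distinct 1-factors meet in exactly one edge. The program is therefore to produce, for an arbitrary bridgeless cubic graph, a pair of 1-factors whose common part is either this small or, more generally, positioned so that its deletion never splits off an odd component.

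The heart of the matter, and the step I expect to be the main obstacle, is exactly this controlled choice of $M_1,M_2$ for general class 2 graphs of large oddness and low connectivity. Here the difficulty is structural: fixing $M_1$ and examining the $2$-factor $F=G-M_1$, the only obstruction to a join disjoint from $M_1$ is the odd cycles of $F$, and forcing a second 1-factor $M_2$ so that $G-(M_1\cap M_2)$ has no odd component means steering $S$ away from the matching $3$-edge-cuts associated with those odd cycles. I would attempt two complementary tactics: (i) reroute $M_2$ along $M_1$-alternating cycles to thin out $S$ near each dangerous cut, controlling how the symmetric difference $M_1\triangle M_2$ interacts with small edge-cuts; and (ii) bootstrap from the weaker \emph{known} statement corresponding to the six-line Fano-flow theorem, namely Proposition \ref{pro_1M2J} (one 1-factor and two joins with empty common intersection, via \cite{MacajovaSkoviera2005112}), by trying to promote one of the two joins to a 1-factor without recreating a common edge. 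I expect tactic (ii) to run into the genuine gap between the six-line and five-line regimes, since shrinking a join down to a 1-factor deletes edges and may reintroduce triple intersections, and it is precisely the failure to do this uniformly that leaves Conjecture \ref{conj_joinFR} open. A complete proof would thus hinge on a parity and connectivity argument showing that the even-component condition in the reformulation can always be met, and I would concentrate all effort on that single existence statement.
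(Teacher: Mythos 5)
You should know at the outset that the statement you were asked to prove is an open conjecture: the paper contains no proof of Conjecture \ref{conj_joinFR}, only a chain of equivalences (Theorem \ref{thm_equivalence}) with the 5-line Fano-flow conjecture (Conjecture \ref{conj_5Fano}) and with the existence of cyclic, respectively triangle-free simple, 1-weak cores, plus the remark that the only settled cases in the literature are graphs of (weak) oddness at most $2$ \cite{MacajovaSkoviera201461,KaiserRaspaud20101307}. Your preliminary reductions are all correct and cleanly argued: in a cubic graph a join is precisely a spanning subgraph with all degrees odd; such a subgraph avoiding a prescribed set $S$ exists if and only if every component of $G-S$ has even order (the $T$-join argument with $T$ the full vertex set of each component is right); and the small-intersection case is sound, since an odd component $C$ of $G-(M_1\cap M_2)$ would satisfy $\partial C\subseteq M_1\cap M_2$ and $|\partial C|\equiv |C|\pmod 2$ in a cubic graph, forcing $|\partial C|=1$ when $|M_1\cap M_2|\le 2$, contradicting bridgelessness. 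This disposes of the Petersen graph and, more generally, any pair of 1-factors meeting in at most two edges.

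The genuine gap is exactly where you locate it, and it is not a lemma-sized gap: the ``single existence statement'' you isolate---two 1-factors $M_1,M_2$ with all components of $G-(M_1\cap M_2)$ even---is not a reduction of the conjecture but a verbatim restatement of it, and neither of your two tactics comes with an argument. Tactic (ii) cannot close it even in principle without new ideas: Proposition \ref{pro_1M2J} (one 1-factor, two joins) is the known 6-line regime, and shrinking one of the two joins to a 1-factor while preserving empty triple intersection is precisely the open passage from 6 lines to 5 lines of the Fano plane; your own text concedes this. So the proposal is a correct equivalent reformulation together with easy cases, followed by an honest research program---which is consistent with, but does not improve on, the status of the statement in the paper. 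For what it is worth, your even-component reformulation is close in spirit to the paper's weak-core viewpoint (Theorem \ref{thm_oddness_core} bounds $\sum_i|\overline{J_i}|_{odd}$ by weak-core parameters), so if you want to push further, the most promising route suggested by the paper itself is via Theorem \ref{thm_equivalence}: it suffices to produce a cyclic 1-weak core, i.e., to choose $M_1,M_2$ and $J$ so that $E_0\cup E_2$ induces a cycle, rather than to control $M_1\cap M_2$ directly.
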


\begin{proposition} \label{pro_1M2J}
Every bridgeless cubic graph has a 1-factor $M$ and two joins
$J_1$ and $J_2$ such that $M\cap J_1\cap J_2 =\emptyset$.
\end{proposition}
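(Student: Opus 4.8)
The plan is to deduce the statement from the theorem of M\'{a}\v{c}ajov\'{a} and \v{S}koviera that every bridgeless cubic graph has a 6-line Fano-flow, via the dictionary between Fano-flows and triples of joins that was announced just before the statement. (Only one direction of that announced equivalence is needed here.) First I would record the dictionary. A nowhere-zero $\mathbb{Z}^3_2$-flow $\phi$ is the same datum as a triple $\phi_1,\phi_2,\phi_3$ of $\mathbb{Z}_2$-flows; writing $C_i$ for the support of $\phi_i$, each $C_i$ is an even subgraph, and $\phi$ being nowhere-zero says exactly that $C_1\cup C_2\cup C_3=E(G)$. Setting $M_i=E(G)\setminus C_i$, each $M_i$ is a join: in a cubic graph every vertex has odd degree, and $\deg_{C_i}(v)\in\{0,2\}$ forces $\deg_{M_i}(v)\in\{1,3\}$. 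Moreover $M_1\cap M_2\cap M_3=E(G)\setminus(C_1\cup C_2\cup C_3)=\emptyset$.

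The point that turns a join into a 1-factor is the following. The nonzero vectors whose $i$-th coordinate is $0$ form a $2$-dimensional subspace of $\mathbb{Z}^3_2$, hence a single line $L_i$ of $\mathcal{F}_7$. At a vertex $v$ the three flow values constitute a line of $\mathcal{F}_7$, and $\deg_{M_i}(v)=3$ occurs precisely when all three of these values have $i$-th coordinate $0$, i.e. when the line appearing at $v$ equals $L_i$. Since $\deg_{M_i}(v)$ is always odd, it follows that $M_i$ is a 1-factor if and only if the flow $\phi$ avoids the line $L_i$.

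Now I would invoke the 6-line theorem to choose a Fano-flow $\phi$ of $G$ missing at least one line $L$ of $\mathcal{F}_7$. The automorphism group of the Fano plane is $GL_3(\mathbb{Z}_2)$ and acts transitively on the seven lines, so there is a linear bijection $\sigma$ of $\mathbb{Z}^3_2$ with $\sigma(L)=L_1$. Applying $\sigma$ edge by edge produces $\sigma\circ\phi$, which is again a Fano-flow: linearity preserves the flow relation $\sum_{e\ni v}\phi(e)=0$, and $\sigma$ preserves being nonzero, being distinct, and being a line; furthermore $\sigma\circ\phi$ avoids $L_1$. By the previous paragraph the associated set $M_1$ is a 1-factor, while $M_2$ and $M_3$ are joins, and their triple intersection is empty. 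Taking $M=M_1$, $J_1=M_2$, $J_2=M_3$ finishes the proof.

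I expect the whole argument to be essentially bookkeeping: the only things to check are that the coordinate hyperplane is itself a line of $\mathcal{F}_7$, that ``$\phi$ avoids $L_i$'' matches ``$M_i$ is a 1-factor'', and that a collineation moving the missing line to a coordinate line lifts to a transformation of Fano-flows. The one genuinely substantive ingredient --- producing a Fano-flow that misses a line at all --- is exactly what the M\'{a}\v{c}ajov\'{a}--\v{S}koviera 6-line theorem supplies, so no further obstacle should arise.
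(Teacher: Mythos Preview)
Your argument is correct and matches the paper's own approach: the paper does not spell out a proof but simply remarks that the equivalence between the 6-line Fano-flow theorem and Proposition~\ref{pro_1M2J} is obtained ``analogously'' to the M\'{a}\v{c}ajov\'{a}--\v{S}koviera dictionary, which is precisely the translation you carry out. Your use of a collineation to move the missing line onto a coordinate line $L_i$ is the standard way to make that dictionary explicit, and nothing further is needed.
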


Let $G$ be a bridgeless cubic graph.
The oddness $\omega(G)$ of $G$ is the minimum number of odd circuits of a 2-factor of $G$.
We define the \emph{weak oddness} $\omega'(G)$ of $G$ as the minimum number of odd components of the complement of a join.
Clearly, $\omega'(G) \leq \omega(G)$. Although  there is a long standing discussion on the question whether the converse is also true, we did not
find this question in any publication. We do not claim authorship but we think that this problem deserves study.

\begin{conjecture}
If $G$ is a bridgeless cubic graph, then $w(G)=w'(G)$.
\end{conjecture}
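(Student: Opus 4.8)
The plan is to work throughout with the complementary reformulation of weak oddness. Since $G$ is cubic, every vertex has degree $3$, so for any join $J$ the complementary edge set $D=E(G)\setminus J$ has even degree ($3$ minus an odd number) at every vertex, and because $\Delta(G)=3$ this degree is forced into $\{0,2\}$. Hence $D$ is a vertex-disjoint union of circuits together with a set $T_D$ of isolated vertices, where $T_D$ is exactly the set of vertices at which $J$ uses all three edges. Conversely the complement of every even subgraph is a join, so as $J$ ranges over all joins, $D$ ranges over all even subgraphs of $G$. The odd components of $D$ are precisely the odd circuits of $D$ together with every vertex of $T_D$ (each an odd component of order $1$). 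Thus
\[
\omega'(G)=\min_{D}\bigl(\#\{\text{odd circuits of }D\}+|T_D|\bigr),
\]
the minimum taken over all even subgraphs $D$, while $\omega(G)$ is the same minimum restricted to spanning $2$-regular $D$, i.e.\ those with $T_D=\emptyset$. As $\omega'(G)\le\omega(G)$ is immediate, the task is to prove $\omega(G)\le\omega'(G)$.

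I would attack this by an extremal argument. Among all even subgraphs $D$ attaining the minimum value $\omega'(G)$, choose one for which $|T_D|$ is smallest, and suppose for contradiction that $|T_D|>0$. The goal is to modify $D$ within the cycle space so as to strictly decrease $|T_D|$ without increasing the number of odd components; iterating until $T_D=\emptyset$ then yields a $2$-factor witnessing $\omega(G)\le\omega'(G)$. Since the joins form a single coset of the cycle space, every admissible modification has the form $D\mapsto D\triangle Z$ with $Z$ an even subgraph. Fix $v\in T_D$. For $D\triangle Z$ to again be a disjoint union of circuits and isolated vertices with $v$ no longer isolated, $Z$ must (i) use exactly two of the three edges at $v$, and (ii) at every other vertex $w$ it meets, either avoid $w$ or use exactly one $D$-edge together with the unique $J$-edge at $w$; condition (ii) is what prevents $Z$ from turning a degree-$2$ vertex into a new isolated vertex. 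I would search for such a $Z$ as a circuit through $v$ obtained by an alternating walk that leaves $v$ along a $J$-edge and thereafter alternates between $D$-edges and $J$-edges, closing up at $v$ and absorbing any further vertices of $T_D$ encountered along the way.

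The hard part will be controlling the parity of the component orders under $D\mapsto D\triangle Z$. Toggling along $Z$ simultaneously absorbs the isolated vertices lying on $Z$ and merges or splits the circuits of $D$ that $Z$ crosses, and each such merge or split can flip the parity of a circuit's order. A priori, absorbing a single isolated vertex may be possible only at the cost of converting several even circuits into odd ones, so that the total number of odd components fails to decrease. The crux is therefore to route $Z$ so that the net change in the number of odd components is at most the gain from the absorbed vertices of $T_D$; I expect this to require either a potential function refining $\#\{\text{odd circuits}\}+|T_D|$, or a global parity argument on the cycle space rather than a purely local reroute. Pinning this down is precisely the obstacle that keeps the statement at the level of a conjecture. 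A sensible first milestone is to settle the smallest cases (say $|T_D|\le 2$, noting that $\omega'(G)$ is always even since $|V(G)|$ is) and to test the construction against the known families of snarks with large oddness.
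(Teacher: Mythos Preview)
The paper contains no proof of this statement: it is stated as an open conjecture (Conjecture~1.6), with the authors noting only that $\omega'(G)\le\omega(G)$ is clear and that the reverse direction ``deserves study.'' There is thus nothing in the paper to compare your proposal against.

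Your reformulation of $\omega'(G)$ as a minimum, over even subgraphs $D$, of the number of odd circuits plus $|T_D|$ is correct, and the extremal strategy of absorbing isolated vertices of $T_D$ via symmetric difference with an alternating circuit $Z$ is a natural line of attack. But as you yourself say, the argument is incomplete: controlling the parities of the circuits that $Z$ merges or splits is left unresolved, and you describe this as ``precisely the obstacle that keeps the statement at the level of a conjecture.'' That gap is not merely technical. The conjecture has in fact since been refuted---Luko\v{t}ka and Maz\'ak constructed bridgeless cubic graphs with $\omega'(G)<\omega(G)$---so no routing of $Z$ can in general achieve the required parity control; the obstacle you flagged is exactly where any such argument must break down.
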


M\'{a}\v{c}ajov\'{a} and \v{S}koviera \cite{MacajovaSkoviera201461} proved
Conjecture \ref{conj_FanRaspaud} for cubic graphs with oddness at most 2. This implies the truth of Conjecture \ref{conj_joinFR} for these graphs as well.
A proof of this particular result is given in \cite{KaiserRaspaud20101307} by Kaiser and Raspaud. However, it is easy to see that $\omega'(G) = 2$ if and only if
$\omega(G) =2$, for each bridgeless cubic graph $G$. Hence, the result of \cite{MacajovaSkoviera201461} is even true for graphs with weak oddness at most 2.

Let $J$ be a join of a cubic graph $G$. Thus every vertex has degree either 1 or 3 in $J$.
A \emph{$J$-vertex} is a vertex of degree 3 in $J$.
Let $n(J)$ denote the number of $J$-vertices.

Let $G$ be a cubic graph and $S$ be a set of three joins $J_1,J_2$ and $J_3$ of $G$.
For $i\in \{0,\ldots,3\}$, let $E_i(S)$ be the set of edges that are contained in precisely $i$ elements of $S$.
When there is no harm of confusion, we write $E_i$ instead of $E_i(S)$.
The \emph{weak core} of $G$ with respect to $S$ (or to $J_1,J_2$ and $J_3$) is a subgraph $G_c$ induced by the union of sets $E_0, E_2$ and $E_3$, that is, $G_c=G[E_0\cup E_2\cup E_3]$.
The weak core $G_c$ is further called a \emph{$k$-weak $l$-core} where precisely $k$ elements of $S$ are not 1-factors and $l=|E_0|+\frac{3}{2}\sum_{i=1}^3n(J_i)$.
We define $\mu_3'(G) = \min \{l \colon\ \mbox{$G$ has a weak $l$-core}\}$.
A $0$-weak core is called a \emph{core} as well.
Define $\mu_3(G) = \min \{l \colon\ \mbox{$G$ has a $l$-core}\}$.
Clearly, $\mu_3'(G)\leq\mu_3(G)$.
It is easy to see that a bridgeless cubic graph $G$ is 3-edge-colorable if and only if $\mu_3'(G)=0.$

The core of a cubic graph was introduced by Steffen \cite{Steffen2014} working on perfect matching covers, and the parameter $\mu_3(G)$ was taken as a measurement on the edge-uncolorability of class 2 cubic graphs.
Weak cores are a natural generalization of the definition of cores for covers with three joins.
In this paper, we study both cores and weak cores of cubic graphs.

A join $J$ of $G$ is \emph{simple} if the subgraph induced by all the $J$-vertices contains no circuit. Clearly, every 1-factor of $G$ is a simple join, and every join of $G$ contains a simple join as a subset.
A weak core $G_c$ of $G$ is \emph{simple} if all the joins with respect to $G_c$ are simple.
A weak core is cyclic if it is a cycle.

Conjecture \ref{conj_FanRaspaud} can be easily formulated as a conjecture on cores in bridgeless cubic graphs:

\begin{conjecture}[\cite{Steffen2014}] \label{conj_cycle core}
Every bridgeless cubic graph has a cyclic core.
\end{conjecture}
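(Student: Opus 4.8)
The plan is to translate the problem entirely into the language of three $1$-factors and then to build the cyclic core by matching surgery. First I would record the local degree structure. Fix three $1$-factors $M_1,M_2,M_3$ and, for a vertex $v$, let $(c_1,c_2,c_3)$ be the numbers of these factors containing the three edges at $v$; since each factor meets $v$ in exactly one edge, $c_1+c_2+c_3=3$, so $(c_1,c_2,c_3)$ is a permutation of $(1,1,1)$, $(0,1,2)$ or $(0,0,3)$. An edge at $v$ lies in the core $G[E_0\cup E_2\cup E_3]$ precisely when its multiplicity is $0,2$ or $3$, so these three patterns give core-degree $0$, $2$ and $3$ respectively. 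Hence the core is an even subgraph, i.e.\ a cycle, if and only if no vertex realises the pattern $(0,0,3)$, that is, if and only if $E_3=M_1\cap M_2\cap M_3=\emptyset$. Thus producing a cyclic core is exactly the task of producing three $1$-factors with empty common intersection, and every step below is aimed at the latter.

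Next I would fix a starting pair. A bridgeless cubic graph is matching covered, so it has two $1$-factors $M_1,M_2$; set $K=M_1\cap M_2$. The symmetric difference $M_1\triangle M_2$ is a disjoint union of even circuits, while $K$ is the set of edges common to the two factors. The goal is then to choose a third $1$-factor $M_3$ with $M_3\cap K=\emptyset$, equivalently a perfect matching of $G-K$, the graph obtained by deleting the edges of $K$. If such an $M_3$ exists we are done, so the two subtasks are: (i) to arrange $M_1,M_2$ so that $G-K$ still carries a perfect matching, and (ii) failing that, to reroute. For (i) one has freedom to re-choose $M_1$ and $M_2$ by alternating exchanges along the even circuits of $M_1\triangle M_2$, which alters $K$ while keeping both factors, and so to push the edges of $K$ into positions that a third matching can avoid.

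Finally I would organise the whole argument as an induction on the (weak) oddness of $G$. The base case is already available: Conjecture \ref{conj_FanRaspaud} is known for bridgeless cubic graphs of oddness at most $2$, which by the reformulation above yields a cyclic core in that range. For the inductive step one would apply standard reductions, such as suppressing a $2$-edge-cut, contracting a removable even subgraph, or splitting along a nontrivial $3$-edge-cut, to pass to smaller graphs of smaller oddness, find cyclic cores there, and lift three intersection-free factors back through the reduction. I expect the main obstacle to be precisely the global control of $E_3$ under these operations: rerouting $M_3$ to destroy one $(0,0,3)$-vertex can create a triple-overlap edge elsewhere, and bridgelessness alone does not guarantee that $G-K$ has a perfect matching. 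No obvious monotone potential decreases at every exchange, and it is exactly here that the local surgery meets the full Fan--Raspaud, equivalently $4$-line Fano-flow, difficulty; so an unconditional proof appears to require a genuinely new global idea beyond the reductions sketched here.
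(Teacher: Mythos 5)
You were asked to prove a statement that the paper itself does not prove: it is Conjecture~\ref{conj_cycle core}, which the paper shows to be equivalent (together with Conjectures~\ref{conj_bipartite core}, \ref{conj_trianglefree} and \ref{conj_3acyclic}) to the Fan--Raspaud Conjecture~\ref{conj_FanRaspaud}, a long-standing open problem; the paper's contribution is this web of equivalences, not a proof of the conjecture. Your opening reduction is correct and is exactly the paper's point of view: at each vertex the multiplicity pattern of three 1-factors is a permutation of $(1,1,1)$, $(2,1,0)$ or $(3,0,0)$, an edge of $E_3$ forces both of its endpoints to have type $(3,0,0)$ and hence core-degree $3$, so the core $G[E_0\cup E_2\cup E_3]$ is a cycle if and only if $E_3=M_1\cap M_2\cap M_3=\emptyset$. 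This is Proposition~\ref{thm_structure of weak-core} specialised to cores, and it is why the paper calls Conjecture~\ref{conj_cycle core} a reformulation of Fan--Raspaud. Up to this point you and the paper agree completely.

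Beyond the reformulation, however, the proposal has genuine gaps, which to your credit you flag yourself. First, there is no argument that $G-K$ (with $K=M_1\cap M_2$) admits a perfect matching, nor that alternating exchanges along the circuits of $M_1\triangle M_2$ can always relocate $K$ so that it does; bridgelessness of $G$ gives no useful structure after deleting $K$, and no monotone quantity controls the creation of new triply covered edges under your surgery. Second, the proposed induction on oddness has no working inductive step: suppressing $2$-edge-cuts or splitting along $3$-edge-cuts need not decrease oddness, and lifting three 1-factors back through such cuts does not preserve the empty-intersection property --- indeed, the paper's own $K_4$-gadget construction (used in Theorem~\ref{thm_equivalence} and, for two 1-factors, to disprove Conjecture~\ref{conj_2acyclic} in Section~4) shows that $2$-cut connections are precisely where intersection constraints concentrate and break. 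The only available base case remains oddness at most $2$ \cite{MacajovaSkoviera201461}. So your write-up should be read as a correct equivalence with Fan--Raspaud followed by a candidly incomplete attack on that open conjecture, not as a proof; had the sketched steps gone through, you would have resolved Conjectures~\ref{conj_FanRaspaud} and \ref{conj_4Fano} as well.
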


Steffen proposed the following seemingly weaker conjecture:

\begin{conjecture}[\cite{Steffen2014}] \label{conj_bipartite core}
Every bridgeless cubic graph has a bipartite core.
\end{conjecture}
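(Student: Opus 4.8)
The plan is to produce, for a given bridgeless cubic graph $G$, three $1$-factors $M_1,M_2,M_3$ whose core $G_c=G[E_0\cup E_2\cup E_3]$ is bipartite, starting from an arbitrary triple and repairing the odd circuits of the core. The first step is a local analysis. At any vertex $v$ each $M_i$ uses exactly one of the three incident edges, so the membership counts of the three edges form a composition of $3$, namely $(3,0,0)$, $(2,1,0)$ or $(1,1,1)$ up to order. Hence every vertex has degree $0$, $2$ or $3$ in $G_c$; a vertex has core-degree $3$ exactly when it is an end of an $E_3$-edge, in which case its two remaining edges lie in $E_0$. The decisive observation is that at a core-vertex of degree $2$ (type $(2,1,0)$) one incident core-edge lies in $E_2$ and the other in $E_0$. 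Thus along any core circuit that avoids $E_3$-vertices the edges alternate between $E_2$ and $E_0$, forcing such a circuit to be even. In particular every odd circuit of $G_c$ must pass through an $E_3$-vertex, so the whole obstruction to bipartiteness is carried by the $E_3$-edges.

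This already settles the cases that are known. If $G$ is $3$-edge-colorable the three colour classes give $E_0=E_2=E_3=\emptyset$ and an empty, hence bipartite, core. If $E_3=\emptyset$ can be arranged --- that is, if the Fan--Raspaud property holds --- then by the alternation every core circuit is even and $G_c$ is bipartite; this covers in particular all graphs of oddness at most $2$, where Conjecture~\ref{conj_FanRaspaud} is known. The strategy for the general case is to decrease the lexicographic potential $\bigl(\text{number of odd circuits of }G_c,\ |E_3|\bigr)$ by Kempe-type exchanges: given an odd circuit $C$, pick a matching $M_i$ and an $M_i$-alternating cycle meeting an $E_3$-edge of $C$, and switch $M_i$ along it. Such a switch keeps $M_i$ a $1$-factor and alters the membership counts only on the exchanged edges, so it can convert an $E_3$-edge into an edge of smaller count (removing a degree-$3$ core-vertex) or move a core-edge into $E_1$ (deleting it from $G_c$), thereby attacking the odd circuit through $C$.

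The main obstacle is exactly the step of guaranteeing that an exchange lowering the potential always exists: a switch that removes one $E_3$-vertex or one odd circuit may raise the count on other edges and create a new odd circuit elsewhere, and bounding this interaction uniformly over all graphs is precisely what I expect to make Conjecture~\ref{conj_bipartite core} as hard as the Fan--Raspaud Conjecture. I therefore do not expect a purely local exchange scheme to push the reduction through for arbitrary oddness; the realistic refinement is to start from a $2$-factor with the minimum number of odd circuits, dispose of its even circuits by alternating the matchings (which contributes only even core circuits, by the alternation observation), and then confront the odd circuits, where controlling the parity of the core is the genuine difficulty. Any argument that controlled this last interaction in full generality would, I expect, also yield the Fan--Raspaud property, which is why I regard it as the essential and, at present, open heart of the statement.
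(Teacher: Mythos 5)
You have not proved the statement, and you say so yourself in your final paragraph; the concrete gap is that your Kempe-exchange scheme is never shown to make progress. A switch of $M_i$ along an $M_i$-alternating circuit changes the membership counts on every edge of that circuit, so it can create new $E_3$-edges and new odd core circuits far from the one you are attacking, and you supply no argument that a potential-decreasing switch always exists or that the lexicographic potential cannot cycle. This gap is not incidental: the statement in question is an \emph{open conjecture} in the paper, which itself offers no proof of it. The paper's contribution is precisely that Conjecture \ref{conj_bipartite core} is \emph{equivalent} to the Fan--Raspaud Conjecture \ref{conj_FanRaspaud}, so your closing suspicion that any argument controlling the exchange interaction ``would also yield the Fan--Raspaud property'' is not merely a heuristic worry --- it is a theorem, and it explains why a purely local repair scheme must stall at exactly the point where yours does.

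What you do establish is correct and coincides with the easy direction of the paper's equivalence theorem. Your local analysis (vertex types $(3,0,0)$, $(2,1,0)$, $(1,1,1)$; core-degrees $3$, $2$, $0$; alternation of $E_0$ and $E_2$ along core circuits avoiding $E_3$-vertices) is exactly Proposition \ref{thm_structure of weak-core} together with the step (2) $\rightarrow$ (3) of the paper's second equivalence theorem, and it yields the conditional result that the Fan--Raspaud property forces a bipartite core, hence the conjecture for oddness at most $2$ via M\'a\v{c}ajov\'a and \v{S}koviera. What is missing from your write-up, and what turns a one-way implication into the paper's equivalence, is the converse reduction: replace every edge $e_i$ of $G$ by a copy of $K_4$ via a $2$-cut connection to obtain $G'$; if $G'$ has a bipartite (indeed merely triangle-free) core with respect to $M_1,M_2,M_3$, then an edge of $G$ lying in all three contracted $1$-factors $con(M_1)\cap con(M_2)\cap con(M_3)$ would force triangles of the core inside the corresponding $K_4$, a contradiction, so the three contracted $1$-factors have empty intersection. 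Your proposal should therefore be read as a correct conditional argument plus an honest identification of the open core of the problem, not as a proof; to match the paper you would additionally need the gadget construction above, and to exceed it you would need to resolve Fan--Raspaud itself.
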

It is clear that Conjecture \ref{conj_cycle core} implies Conjecture \ref{conj_bipartite core} because all circuits in a cyclic core are of even length. Here, we show that the converse implication is also true, that is, Conjectures \ref{conj_cycle core} and \ref{conj_bipartite core} are equivalent. Hence, our result furnishes a new equivalent formulation for Fan-Raspaud Conjecture. We also show that the condition on the cores can be further relaxed. We even show that the following conjectures are
equivalent to Fan-Raspaud Conjecture.

\begin{conjecture}\label{conj_trianglefree}
Every bridgeless cubic graph has a triangle-free core.
\end{conjecture}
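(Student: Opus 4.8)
The plan is to prove Conjecture~\ref{conj_trianglefree} by induction on the number of vertices, eliminating triangles one at a time. The base case is the most favourable: if $G$ is triangle-free, then every subgraph of $G$ is triangle-free, so \emph{any} core of $G$ is automatically triangle-free, and a core exists because a bridgeless cubic graph has three $1$-factors. Thus all the work lies in the inductive step, where $G$ contains a triangle $T=xyz$. I would contract $T$ to a single vertex $t$, obtaining a smaller bridgeless cubic (multi)graph $G'=G/T$ whose three edges at $t$ are the three edges of $G$ leaving $T$. By the induction hypothesis $G'$ has a triangle-free core, arising from three $1$-factors $M_1',M_2',M_3'$, and I would lift these to $1$-factors $M_1,M_2,M_3$ of $G$ via the standard correspondence: the external edge chosen at $t$ forces the single edge of $T$ used at the opposite vertex. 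The goal is then to show that the lifted core $G[E_0\cup E_2\cup E_3]$ of $G$ is still triangle-free.

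The only new triangle the lift can introduce is $T$ itself. Writing $S=\{M_1,M_2,M_3\}$, each $M_i$ uses exactly one edge of $T$, so the three edges of $T$ carry membership counts summing to $3$. A short case check shows that $T$ becomes a triangle of the new core exactly when this distribution is $(3,0,0)$, that is, when a single edge of $T$ lies in all three $M_i$; in the distributions $(2,1,0)$ and $(1,1,1)$ some edge of $T$ falls in $E_1$ and is deleted from the core. The pattern $(3,0,0)$ occurs precisely when $M_1',M_2',M_3'$ all select the \emph{same} external edge at $t$, i.e. when $t$ is a degree-$3$ vertex of the core $G_c'$ of $G'$. Hence it suffices to produce a triangle-free core of $G'$ in which the contracted vertex $t$ has degree at most $2$; modulo the routine verification that contraction introduces no other core triangle (a point concerning only small configurations sharing an edge with $T$, which become digons in $G'$), this is the only obstruction to control.

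The main obstacle is exactly this extra demand: bare induction yields \emph{some} triangle-free core of $G'$ with no handle on the degree of $t$. To gain that handle I would argue extremally, choosing among all cores of $G$ one that first minimises $|E_3|$ (equivalently, the number of degree-$3$ core vertices) and then the number of triangles, and attempt to destroy a remaining core triangle by swapping along an alternating cycle of some $M_i\triangle M_j$. The difficulty is that these Kempe-type swaps act globally: removing one degree-$3$ vertex, or one triangle, can create others elsewhere, and I do not expect a clean monovariant. This is not an artefact of the method. A core with $E_3=\emptyset$ has no degree-$3$ vertices and is a disjoint union of even cycles, so forcing $E_3=\emptyset$ is precisely the content of the Fan--Raspaud Conjecture~\ref{conj_FanRaspaud}, and controlling degree-$3$ core vertices under contraction is therefore as hard as Fan--Raspaud itself. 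Consequently I expect the direct argument to yield the full statement only on classes where Fan--Raspaud is already known — for instance bridgeless cubic graphs of (weak) oddness at most $2$ — while the general case remains genuinely open.
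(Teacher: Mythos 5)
You have not proved the statement, and you say so yourself in your final paragraph---but it is worth being precise about why no blind proof should have been expected here: Conjecture~\ref{conj_trianglefree} is an \emph{open conjecture} in the paper, which offers no proof of it; what the paper proves (second theorem of Section~3) is that it is \emph{equivalent} to the Fan--Raspaud Conjecture~\ref{conj_FanRaspaud} and to the 4-line Fano-flow Conjecture~\ref{conj_4Fano}. Your local analysis is correct as far as it goes: under the standard lift through the triangle contraction, each $M_i$ meets $T$ in exactly one edge, the membership counts on the three edges of $T$ sum to $3$, and $T$ survives in the lifted core exactly in the $(3,0,0)$ case, i.e.\ exactly when the contracted vertex $t$ has all three incident edges in $E_0\cup E_3$ of the core of $G/T$. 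But the step you flag as the obstacle is genuinely the whole problem: producing a triangle-free core of $G/T$ in which $t$ has core-degree at most $2$ amounts to controlling $E_3$, and the paper's equivalence proof makes your diagnosis rigorous in the strongest sense. The paper attaches a copy of $K_4$ to \emph{every} edge of $G$ by a 2-cut connection and shows that a triangle-free core of the enlarged graph projects to three 1-factors of $G$ with empty intersection; hence any argument completing your induction unconditionally would resolve Fan--Raspaud itself. No choice of monovariant or Kempe-swap bookkeeping can circumvent this.

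Two further remarks. First, the \emph{conditional} direction is much simpler than your induction suggests: if Fan--Raspaud holds for $G$, take $M_1,M_2,M_3$ with $M_1\cap M_2\cap M_3=\emptyset$; then $E_3=\emptyset$ and, by Proposition~\ref{thm_structure of weak-core}, the core is a cycle whose edges alternate between $E_0$ and $E_2$, hence bipartite and in particular triangle-free---this is exactly how the paper derives Conjecture~\ref{conj_trianglefree} from statement (2), with no contraction at all. Second, even as a partial argument your sketch carries technical debts you should not dismiss as routine: $G/T$ may be a multigraph (a digon arises whenever $T$ shares an edge with another triangle), so the induction hypothesis must be formulated for bridgeless cubic multigraphs, and the verification that the lift creates no core triangle other than $T$ must treat precisely those digon configurations. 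These are secondary, though; the essential gap is the acknowledged Fan--Raspaud-strength control of $E_3$, and the paper's equivalence theorem shows it is irreducible rather than an artefact of your method.
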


\begin{conjecture}\label{conj_3acyclic}
Every bridgeless cubic graph has three 1-factors such that the complement of their union is an acyclic graph.
\end{conjecture}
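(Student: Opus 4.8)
The plan is to reduce the statement to the Fan--Raspaud Conjecture (Conjecture~\ref{conj_FanRaspaud}) by means of a purely local observation, and then to settle the cases in which that conjecture is already known. Let $M_1,M_2,M_3$ be three $1$-factors of a bridgeless cubic graph $G$ and put $S=\{M_1,M_2,M_3\}$; the complement of $M_1\cup M_2\cup M_3$ is exactly the edge set $E_0(S)$. First I would show that $E_0(S)$ is acyclic whenever $M_1\cap M_2\cap M_3=\emptyset$. Suppose $Z\subseteq E_0(S)$ were a circuit. At each vertex $v$ of $Z$ the two edges of $Z$ incident with $v$ lie in $E_0(S)$, hence belong to none of the $M_i$. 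Since every $M_i$ is a $1$-factor it must cover $v$, so the covering edge is forced to be the third, off-circuit edge $f_v$ at $v$; as this holds for all three factors, $f_v\in M_1\cap M_2\cap M_3$. Thus a circuit in $E_0(S)$ forces $M_1\cap M_2\cap M_3\neq\emptyset$, and contrapositively any three $1$-factors with empty intersection automatically have an acyclic complement of their union. Consequently it suffices to exhibit, for every bridgeless cubic graph, three $1$-factors with empty common intersection --- that is, to prove Conjecture~\ref{conj_FanRaspaud}.

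Second, I would dispose of the two regimes in which this is already available. If $G$ is $3$-edge-colorable, the three color classes are $1$-factors covering every edge exactly once, so $E_0(S)=\emptyset$ and the complement is trivially acyclic. If $\omega(G)\le 2$ (equivalently $\omega'(G)\le 2$, since oddness is always even and $\omega'(G)=2$ iff $\omega(G)=2$), then Conjecture~\ref{conj_FanRaspaud} holds by the theorem of M\'{a}\v{c}ajov\'{a} and \v{S}koviera \cite{MacajovaSkoviera201461}, and the reduction of the first paragraph yields three $1$-factors whose union has acyclic complement. This already establishes the statement for all bridgeless cubic graphs of (weak) oddness at most~$2$.

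The remaining case --- bridgeless cubic graphs of oddness at least~$4$ --- is where the main obstacle lies, and I do not expect to overcome it by elementary means. A natural line of attack would be induction on the oddness: take a $1$-factor $M_1$ whose complementary $2$-factor has few odd circuits, and try to route $M_2,M_3$ so as to leave every circuit touched by a non-$E_3$ edge, thereby preventing any circuit from sinking entirely into $E_0$. The difficulty is that controlling the triple intersection globally is exactly the content of the Fan--Raspaud Conjecture: by the equivalences established in this paper the present statement is not merely implied by but equivalent to Conjecture~\ref{conj_FanRaspaud}, so no genuinely weaker argument can exist, and a complete proof for arbitrary oddness would resolve an open problem. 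The honest conclusion of the plan is therefore a conditional one: the statement holds in full generality if and only if Fan--Raspaud does, and it holds unconditionally for oddness at most~$2$.
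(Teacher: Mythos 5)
You should first note what the target statement is: Conjecture~\ref{conj_3acyclic} is an \emph{open conjecture} in this paper, not a theorem, so there is no complete proof to reproduce; the paper's entire treatment of it is the equivalence theorem showing it stands or falls with the Fan--Raspaud Conjecture. Your honest, conditional conclusion is therefore exactly right, and what you do prove is correct. Your forward implication (Fan--Raspaud implies the statement) is sound: if $Z\subseteq E_0$ were a circuit, then at each vertex $v$ of $Z$ both circuit edges avoid every $M_i$, so each $1$-factor must cover $v$ by the third edge $f_v$, forcing $f_v\in M_1\cap M_2\cap M_3$. This is the same direction the paper proves as its implication (2)$\rightarrow$(5), but your route is slightly different and more elementary: the paper deduces acyclicity of $G[E_0]$ from the structure of the core (Proposition~\ref{thm_structure of weak-core}: circuits of the core alternate between $E_0$ and $E_2$), whereas your argument is purely vertex-local and needs no core machinery at all. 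Your dispatching of the known cases ($3$-edge-colorable graphs trivially, and oddness at most $2$ via M\'{a}\v{c}ajov\'{a} and \v{S}koviera \cite{MacajovaSkoviera201461}) is consistent with the paper's remarks and correct; note that one direction of your parenthetical (``$\omega'(G)=2$ iff $\omega(G)=2$'') is exactly the observation the paper itself makes without proof.

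One caveat on your closing claim that ``no genuinely weaker argument can exist'': this rests on the \emph{converse} implication, that the acyclicity statement implies Fan--Raspaud, and that direction is not local --- as your own reduction shows, a nonempty triple intersection does not by itself create a circuit in $E_0$. The paper obtains the converse via the $2$-cut connection gadget (replacing every edge by a copy of $K_4$, so that a common edge in the contracted $1$-factors forces a $4$-circuit of $T_1$ inside $E_0$). You correctly cite the paper's equivalences for this rather than claiming it, so there is no gap in what you assert; just be aware that the equivalence, and hence the ``if and only if'' in your final sentence, depends on that gadget construction and not on your local lemma alone.
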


Analogously, we formulate Conjecture \ref{conj_joinFR} as a conjecture on 1-weak core:

\begin{conjecture}\label{conj_cycle star core}
Every bridgeless cubic graph has a cyclic 1-weak core.
\end{conjecture}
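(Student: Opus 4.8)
The plan is to prove that Conjecture \ref{conj_cycle star core} is equivalent to Conjecture \ref{conj_joinFR}, exactly paralleling the equivalence of Conjecture \ref{conj_FanRaspaud} with the cyclic--core reformulation. By definition a cyclic $1$-weak core is a triple $S=\{M_1,M_2,J\}$ in which $M_1,M_2$ are $1$-factors, $J$ is a join that is not a $1$-factor, and the induced subgraph $G_c=G[E_0\cup E_2\cup E_3]$ is a cycle. So the whole argument rests on translating the condition ``$G_c$ is a cycle'' into a condition on the triple intersection $M_1\cap M_2\cap J$.

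The heart of the matter is a local degree count at a single vertex $v$. For an incident edge $e$ let $c(e)\in\{0,1,2,3\}$ be the number of members of $S$ containing $e$; then the degree of $v$ in $G_c$ equals the number of incident edges with $c(e)\in\{0,2,3\}$, that is, $3$ minus the number of incident edges with $c(e)=1$. Since $M_1$ and $M_2$ each contribute one edge at $v$ and $J$ contributes $\deg_J(v)\in\{1,3\}$ edges, the multiset $\{c(e)\}$ over the three edges at $v$ sums to $3$ when $v$ is not a $J$-vertex and to $5$ when it is. Running through the finitely many possibilities, the admissible multisets are $(1,1,1),(2,1,0),(3,0,0)$ in the first case and $(2,2,1),(3,1,1)$ in the second, with corresponding degrees of $v$ in $G_c$ equal to $0,2,3$ and $2,1$. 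Hence the degree of $v$ in $G_c$ is even (and then at most $2$) if and only if no edge at $v$ has $c(e)=3$, i.e. $v$ is incident with no edge of $E_3$. Consequently $G_c$ is a cycle if and only if $E_3=\emptyset$, and by definition $E_3=M_1\cap M_2\cap J$; this is the key lemma.

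Given the lemma, the direction ``Conjecture \ref{conj_cycle star core} $\Rightarrow$ Conjecture \ref{conj_joinFR}'' is immediate: the two $1$-factors and the join witnessing a cyclic $1$-weak core already satisfy $M_1\cap M_2\cap J=\emptyset$. For the converse I would start from a triple $M_1,M_2,J$ with $M_1\cap M_2\cap J=\emptyset$ supplied by Conjecture \ref{conj_joinFR}. If $J$ is not a $1$-factor, the lemma shows that $(M_1,M_2,J)$ already yields a cyclic $1$-weak core. The genuine difficulty is the remaining case, where the supplied join $J$ is itself a $1$-factor, say $M_3$ (so the data is a Fan--Raspaud triple and the subgraph is a $0$-weak core); here one must manufacture a \emph{proper} join without destroying the empty intersection.

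I expect this last case to be the main obstacle, and I would handle it as follows. If $G$ is $3$-edge-colorable, take two disjoint $1$-factors $M_1,M_2$ together with $J=E(G)$: then $M_1\cap M_2=\emptyset$, so $M_1\cap M_2\cap J=\emptyset$, while $E(G)$ is a proper join and the resulting core $G[E_2]=M_1\cup M_2$ is an even $2$-factor, hence a cycle. If $G$ is class $2$, set $A=M_1\cap M_2$; since $A\cap M_3=\emptyset$ we have $A\subseteq E(G)\setminus M_3$, a $2$-factor. I would replace $M_3$ by the proper join $J'=M_3\cup K$, where $K$ is a circuit of this $2$-factor chosen to avoid $A$; then $J'$ is a proper join with $J'\cap A=\emptyset$, so $M_1\cap M_2\cap J'=\emptyset$ and the lemma applies. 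The crux is guaranteeing such a circuit exists: one must exclude the possibility that every circuit of every admissible complementary $2$-factor meets $A$ (for the Petersen graph, for instance, the complementary $2$-factor splits into two $5$-circuits and the single edge of $A$ lies on only one of them). Securing this avoidance in general, by exploiting bridgelessness and the freedom to vary the third $1$-factor, is the step that will demand the real work.
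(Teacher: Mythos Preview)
Your key lemma (for a triple $M_1,M_2,J$ with $M_1,M_2$ perfect matchings and $J$ a join, the weak core $G_c$ is a cycle iff $E_3=M_1\cap M_2\cap J=\emptyset$) is correct, and it is precisely the specialization of the paper's Proposition~\ref{thm_structure of weak-core} together with the type analysis of Figure~\ref{fig_types}. The direction ``cyclic $1$-weak core $\Rightarrow$ empty triple intersection'' is then immediate, and this coincides with the paper's argument.

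Where you diverge from the paper is in the converse. In Theorem~\ref{thm_equivalence} the implication $(2)\Rightarrow(3)$ is dispatched in a single line: given $M_1,M_2,J$ with empty intersection, Proposition~\ref{thm_structure of weak-core} gives that the associated weak core is cyclic, and the paper simply calls this a cyclic $1$-weak core. You have noticed, correctly, that under the literal definition (\emph{precisely} one of the three joins is not a $1$-factor) this step is incomplete when the join $J$ supplied by Conjecture~\ref{conj_joinFR} happens to be a $1$-factor. The paper does not treat this as an obstacle; in context one should read the chain of implications as relating conjectures of increasing strength (a cyclic $0$-weak core is, if anything, a stronger conclusion), and the Problem stated immediately after Conjecture~\ref{conj_cycle star core} only makes sense if ``$k$-weak'' is read monotonically.

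Your attempt to \emph{repair} this definitional point, however, has a real gap. In the class~$2$ case you propose to replace $M_3$ by $J'=M_3\cup K$ for some circuit $K$ of the $2$-factor $\overline{M_3}$ disjoint from $A=M_1\cap M_2$. There is no reason such a $K$ exists: for instance $\overline{M_3}$ may be a single Hamiltonian circuit, which then necessarily contains every edge of $A$. Varying the Fan--Raspaud triple does not obviously help, and the ``freedom to vary the third $1$-factor'' you appeal to is not a mechanism that has been made to work. So the hard step you single out is genuinely hard, and not needed: the paper's route is to accept the weak core produced by $M_1,M_2,J$ as it stands rather than to force the join to be proper.
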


We prove the equivalence between this conjecture and the statement that every bridgeless cubic graph has a triangle-free simple 1-weak core.

In general, the Fano-flow can be related to cyclic weak core. Instead of $k$-line Fano-flow problem, we ask following equivalent question:

\begin{problem}
What is the minimum $k$ such that every bridgeless cubic graph has a cyclic $k$-weak core?
\end{problem}

As above, it was proved that $k<2$ and conjectured that either $k=0$ or $k=1$.

We summarize all announced implications in Figure \ref{summary}.

\begin{figure}[ht]
  \centering
  % Requires \usepackage{graphicx}
 \includegraphics[width=10.5cm]{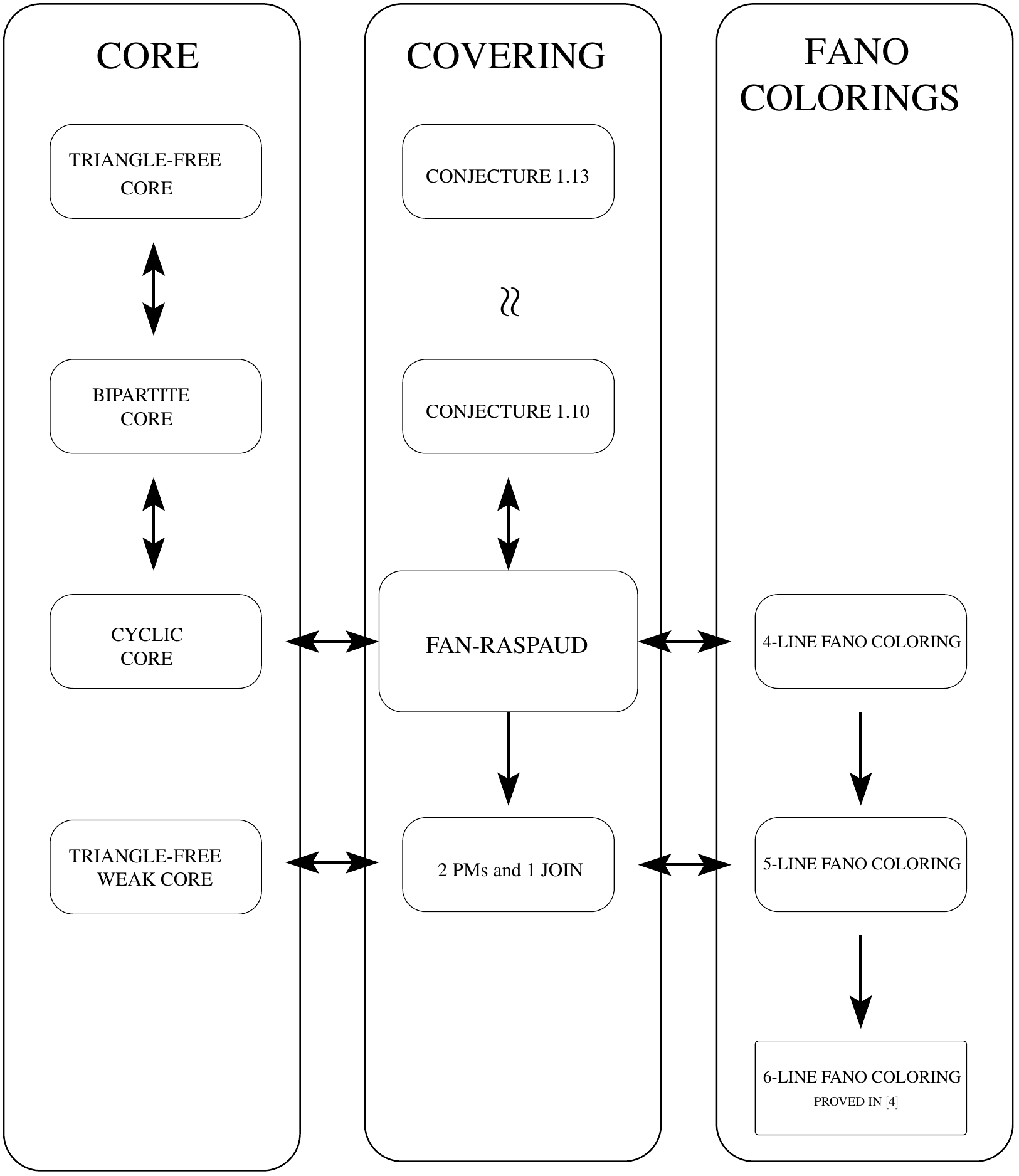}
  \caption{Conjectures related to Fan-Raspaud Conjecture}\label{summary}
\end{figure}

Section \ref{prop_weak_core} studies properties of weak cores, and it shows that the weak oddness of a bridgeless cubic graph is bounded in terms of its weak cores.

Finally, in the last section, we disprove the following stronger version of Conjecture \ref{conj_3acyclic}:
\begin{conjecture}[\cite{Mazzuoccolo2013235}] \label{conj_2acyclic}
Every bridgeless cubic graph has two 1-factors such that the complement of their union is an acyclic graph.
\end{conjecture}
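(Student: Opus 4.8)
The statement is a conjecture that I aim to refute, so the plan is to exhibit a single bridgeless cubic graph $G$ in which \emph{no} two $1$-factors have acyclic complement. First I would record two reductions that pin down where a counterexample can live. If $G$ is $3$-edge-colorable, then taking $M_1,M_2$ to be two color classes makes $\overline{M_1\cup M_2}$ the third color class, a $1$-factor and hence acyclic; so $G$ must be class $2$. Moreover, for any two $1$-factors $M_1,M_2$ of a cubic graph, every vertex lies on exactly one edge of each $M_i$, so $H:=\overline{M_1\cup M_2}$ has maximum degree $2$, with its degree-$2$ vertices being precisely the endpoints of the edges of $M_1\cap M_2$. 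Thus $H$ is a disjoint union of paths and circuits, and it is acyclic exactly when it has no circuit; in particular every circuit of $H$ visits only degree-$2$ vertices. This is why sparse intersections are harmless: for the Petersen graph any two distinct $1$-factors meet in a single edge, producing only two degree-$2$ vertices, too few to close up a circuit, so $H$ is always acyclic. A counterexample therefore needs $1$-factors that overlap heavily, which steers me toward a class-$2$ graph of small cyclic edge-connectivity built from triangles.

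The mechanism I would exploit is that a triangle $T=xyz$ with outgoing edges $xx',yy',zz'$ is an odd $3$-edge-cut, so every $1$-factor $M$ meets $\{xx',yy',zz'\}$ in either one or three edges; call $T$ \emph{saturated} by $M$ in the latter case. A triangle lies entirely in $H=\overline{M_1\cup M_2}$, hence forms a circuit of $H$, exactly when it is saturated by \emph{both} $M_1$ and $M_2$. Writing $S(M)$ for the set of triangles saturated by $M$, the graph $G$ violates Conjecture~\ref{conj_2acyclic} as soon as the family $\{S(M):M\text{ a }1\text{-factor of }G\}$ is intersecting. The plan is to construct an explicit class-$2$ graph $G$ containing three pairwise-disjoint triangles $T_1,T_2,T_3$ such that every $1$-factor saturates at least two of them; then each $S(M)$ contains at least two of the three triangles, so any two such sets meet and every pair of $1$-factors leaves a triangle inside its complement. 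Such a $G$ can be produced from a snark by truncating a suitable set of vertices (replacing each chosen vertex by a triangle): truncation preserves non-$3$-edge-colorability, so $G$ stays class $2$, stays simple and bridgeless, and acquires the triangles $T_1,T_2,T_3$, while the surrounding structure constrains the $1$-factors.

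The main obstacle is the global, uniform claim ``every $1$-factor saturates at least two of $T_1,T_2,T_3$'', equivalently ``no $1$-factor uses an internal edge of two different $T_i$''. Note this cannot be strengthened to a single triangle saturated by \emph{all} $M$, since in a bridgeless cubic graph every edge, and in particular every triangle edge, lies in some $1$-factor; thus $\{S(M)\}$ can never be a star, and engineering an honest intersecting but non-star family is exactly the difficulty. I would establish the two-out-of-three property by analysing how a $1$-factor interacts with the three odd $3$-edge-cuts separating the $T_i$ from the rest, combining the parity of each cut with the restrictions the snark imposes on the remaining vertices; ruling out the degenerate case $S(M)=\emptyset$, a matching using an internal edge of all three triangles at once, is the most delicate point, and if a clean parity argument is unavailable it can be discharged by a finite check over the $1$-factors of the concrete $G$. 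Verifying that the resulting $H$ has no circuit of non-triangular type is then automatic once a saturated common triangle is exhibited, completing the refutation.
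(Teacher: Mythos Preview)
Your overall strategy---find a bridgeless cubic graph with a family of short cycles such that any two $1$-factors leave at least one of them completely uncovered---is exactly the right intuition, but the concrete construction you propose cannot work. If $G'$ is obtained from a snark $G$ by truncating vertices $v_1,v_2,v_3$ into triangles $T_1,T_2,T_3$, then \emph{every} $1$-factor $M$ of $G$ lifts to a $1$-factor $M'$ of $G'$ that saturates \emph{none} of the $T_i$: at each $v_i$ take the outgoing edge corresponding to the unique $M$-edge at $v_i$, together with the opposite triangle edge. Since any bridgeless cubic graph has a $1$-factor, such $M'$ always exist, so $S(M')=\emptyset$ occurs for every choice of snark and every choice of truncated vertices. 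Your ``most delicate point'' is therefore not merely delicate---it is false for this construction, and the promised finite check would fail. (You correctly observed that no single triangle can be saturated by all $1$-factors; the same lifting argument kills the two-out-of-three version.)

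The paper's counterexample works on edges rather than vertices. Take the Petersen graph $P$ and, on each edge $e_i$, perform a $2$-cut connection with a copy of $K_4$; call the result $G$. Any $1$-factor of $G$ projects to a $1$-factor of $P$, and any two $1$-factors of $P$ share exactly one edge, say $e_1$. In the corresponding $K_4$-gadget both bridge-edges $e_1',e_1''$ lie in $M_1\cap M_2$, forcing each $M_j$ to use the same single interior edge of the gadget and leaving the remaining four interior edges---a $4$-cycle---in $\overline{M_1\cup M_2}$. The key difference from your plan is that the $K_4$-gadget on an \emph{edge} is rigid: once the two connecting edges are in a $1$-factor, the gadget's contribution is completely determined, whereas a triangle inserted at a \emph{vertex} always admits the ``one outgoing edge plus one triangle edge'' option that lets a $1$-factor avoid saturating it.
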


Even if we prove that previous conjecture is false in that general form, we believe that it could be still true under stronger connectivity assumptions. In particular, we recall that it was verified true for all snarks, hence cyclically $4$-edge-connected cubic graphs, of order at most $34$ (see \cite{Mazzuoccolo2013235}).

More precisely, we wonder if every 3-connected (cyclically $4$-edge-connected) cubic graph has two 1-factors such that the complement of their union is an acyclic graph.

\section{The weak core of a cubic graph} \label{prop_weak_core}
Let $J_1,J_2$ and $J_3$ be three joins of a cubic graph $G$.
We say that a vertex $v$ of $G$ has type $(x,y,z)$ if the three edges incident to $v$ are covered $x,y$ and $z$ times by $\{J_1,J_2,J_3\}$, respectively. We denote by $a,b,c,d,e,f,g$ the number of vertices of type $(3,3,3),(3,2,2)$, $(3,1,1),(2,2,1)$, $(1,1,1),(2,1,0),(3,0,0)$, respectively (see also Figure \ref{fig_types}). Clearly, every vertex has precisely one type. Note that vertices of type $(3,3,3)$, $(3,2,2)$, $(3,1,1)$ and $(2,2,1)$ are $J_i$-vertices for some $i$.

\begin{figure}[ht]
  \centering
  % Requires \usepackage{graphicx}
 \includegraphics[width=7cm]{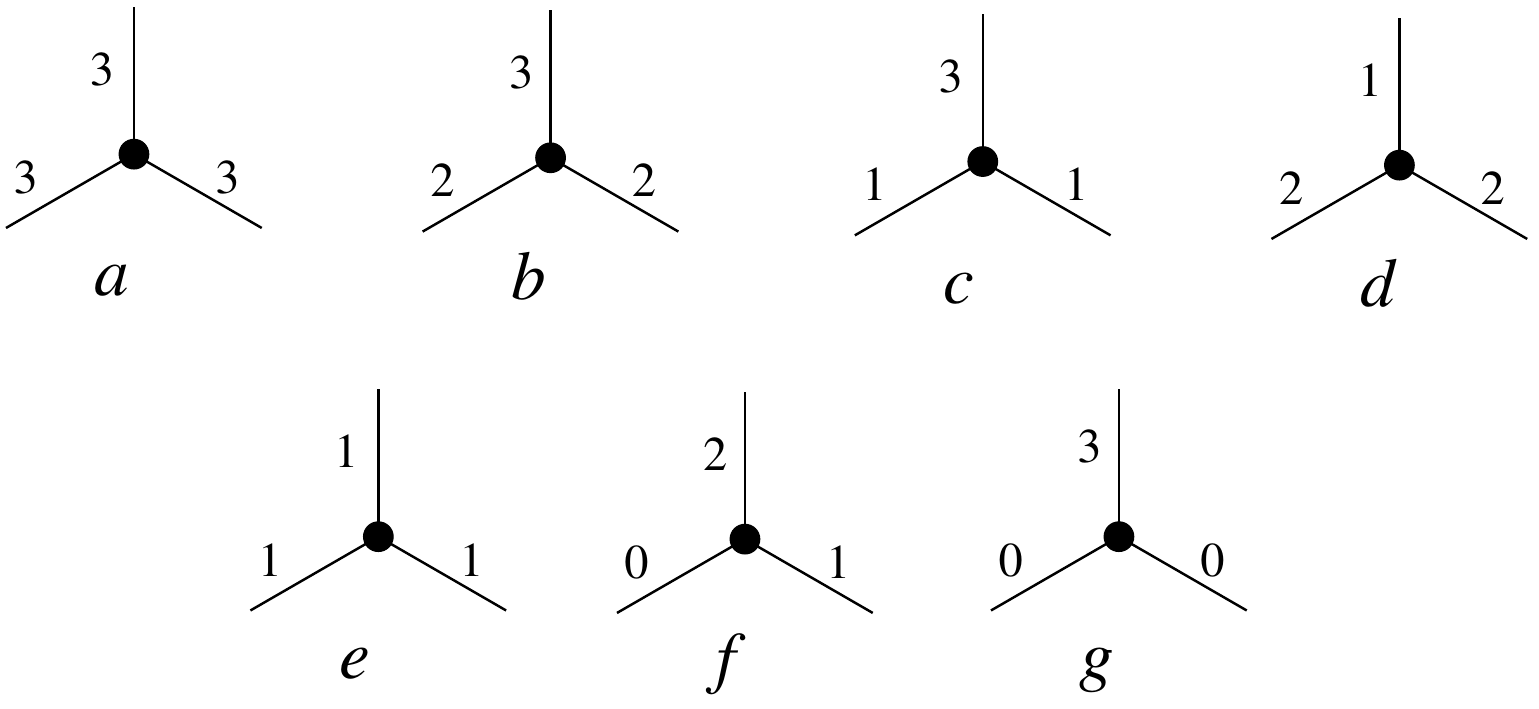}\\
  \caption{Vertex types}\label{fig_types}
\end{figure}

\begin{proposition}\label{lem_edges_equality}
Let $G$ be a cubic graph, and $J_1,J_2$ and $J_3$ be three joins of $G$. Then
$$|E_0|+\sum_{i=1}^3 n(J_i)=|E_2|+2|E_3|.$$
 \end{proposition}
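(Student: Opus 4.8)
The plan is to establish the identity by double counting the quantity $\sum_{i=1}^3 |J_i|$, the total number of edge--join incidences, in two different ways, and then to eliminate the auxiliary terms using the fact that $G$ is cubic. The idea is that a single such count, combined with the elementary edge-partition relation $|E_0|+|E_1|+|E_2|+|E_3|=|E(G)|$, already forces the claimed equation after cancellation.

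First I would count $\sum_{i=1}^3 |J_i|$ by edges. Since $E_j$ is by definition the set of edges lying in exactly $j$ of the three joins, each edge of $E_j$ is counted $j$ times in this sum, so $\sum_{i=1}^3 |J_i| = |E_1| + 2|E_2| + 3|E_3|$. Next I would count the same quantity by vertices. Writing $d_i(v)$ for the degree of $v$ in $J_i$ and $n=|V(G)|$, the defining parity property of a join forces $d_i(v)$ to be odd at every vertex; since $G$ is cubic this means $d_i(v)\in\{1,3\}$, with $d_i(v)=3$ precisely when $v$ is a $J_i$-vertex. Letting $t(v)$ denote the number of indices $i$ for which $v$ is a $J_i$-vertex, we get $\sum_{i=1}^3 d_i(v)=3+2t(v)$. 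Summing over all $v$ and dividing by $2$ (each edge has two endpoints) yields $\sum_{i=1}^3 |J_i| = \tfrac{3n}{2} + \sum_{i=1}^3 n(J_i)$, using $\sum_v t(v)=\sum_{i=1}^3 n(J_i)$.

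Finally I would equate the two expressions and substitute $|E_1| = |E(G)|-|E_0|-|E_2|-|E_3| = \tfrac{3n}{2} - |E_0| - |E_2| - |E_3|$, where the middle equality uses that $G$ is cubic. After cancelling the common term $\tfrac{3n}{2}$, the dependence on $n$ and on $|E_1|$ disappears and exactly the identity $|E_0| + \sum_{i=1}^3 n(J_i) = |E_2| + 2|E_3|$ survives. The only genuinely non-formal step is the per-vertex degree formula $\sum_{i=1}^3 d_i(v)=3+2t(v)$, which is where the join condition (odd degree at each vertex) is actually used; everything else is bookkeeping, so I expect no real obstacle. An alternative route is to match the vertex-type table with the counts $a,\dots,g$ introduced above, expressing each of $|E_0|,|E_2|,|E_3|$ and $\sum_{i=1}^3 n(J_i)$ as explicit linear combinations of $a,\dots,g$ by double counting incidences type by type and then comparing; this works but requires the full seven-case enumeration, so I would prefer the shorter incidence count above.
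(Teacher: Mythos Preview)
Your argument is correct. The double count of $\sum_{i=1}^3 |J_i|$ by edges versus by vertices is sound, the identity $\sum_{i=1}^3 d_i(v)=3+2t(v)$ is exactly the join condition applied in a cubic graph, and the final substitution $|E_1|=\tfrac{3n}{2}-|E_0|-|E_2|-|E_3|$ cleanly eliminates the auxiliary terms.

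This is a genuinely different route from the paper's proof. The paper proceeds precisely via the alternative you mention at the end: it uses the seven vertex types $(3,3,3),\dots,(3,0,0)$ with multiplicities $a,\dots,g$, writes each of $|E_0|,|E_2|,|E_3|$ and $\sum_i n(J_i)$ as an explicit linear combination of $a,\dots,g$ by counting edge--vertex incidences type by type, and then checks that the two sides agree symbolically. Your incidence count is shorter and conceptually cleaner, since it never needs to enumerate the seven cases and makes transparent exactly where the join hypothesis enters. The trade-off is that the paper's type table is reused elsewhere (e.g.\ in the structural description of $G[E_0\cup E_2]$), so in context the enumeration is not wasted work; as a standalone proof of this proposition, however, your version is preferable.
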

\begin{proof}
By type definitions, we have $\sum_i n(J_i)=3a+2b+c+d$, $|E_0|=\frac{f}{2}+g$, $|E_2|=b+d+\frac{f}{2}$ and $|E_3|=\frac{3a}{2}+\frac{b}{2}+\frac{c}{2}+\frac{g}{2}$. Hence, $\sum_i n(J_i)+|E_0|=3a+2b+c+d+\frac{f}{2}+g=|E_2|+2|E_3|$ holds.
\end{proof}

\begin{proposition}\label{thm_structure of weak-core}
If $G_c$ is a weak core of a cubic graph $G$, then $G[E_0\cup E_2]$ is either an empty graph or a cycle.
\end{proposition}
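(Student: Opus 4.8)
The plan is to compute, for every vertex $v$ of $G$, its degree in the edge-induced subgraph $G[E_0\cup E_2]$ and to show that this degree is always $0$ or $2$. Once this is established the claim follows at once: an edge-induced subgraph has no isolated vertices, so every vertex actually present in $G[E_0\cup E_2]$ is an endpoint of one of its edges and hence has degree at least one; combined with the bound this forces degree exactly $2$. Thus $G[E_0\cup E_2]$ is $2$-regular, i.e.\ a disjoint union of circuits — a cycle — unless $E_0\cup E_2=\emptyset$, in which case it is the empty graph.

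To obtain the degree bound I would argue by a single parity observation rather than a case analysis. Fix a vertex $v$ and let $n_i$ be the degree of $v$ in the join $J_i$. The total number of times the three edges incident to $v$ are covered by $S=\{J_1,J_2,J_3\}$, counted with multiplicity, equals $n_1+n_2+n_3$. Since $G$ is cubic every vertex has odd degree in $G$, and by the definition of a join each $J_i$ has the same parity of degree at $v$ as $G$ does; hence each $n_i\in\{1,3\}$ and the total coverage $n_1+n_2+n_3$ at $v$ is odd. Reducing modulo $2$, the number of edges incident to $v$ whose coverage is odd (the edges lying in $E_1\cup E_3$) has the same parity as this total and is therefore odd, i.e.\ equal to $1$ or $3$. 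Consequently the number of incident edges with even coverage (the edges lying in $E_0\cup E_2$) is $3$ minus an odd number, namely $0$ or $2$, and this is precisely the degree of $v$ in $G[E_0\cup E_2]$.

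This already settles the proposition; the seven vertex types of Figure \ref{fig_types} may be listed as an explicit confirmation, the even-coverage edges numbering $0$ at the types $(3,3,3),(3,1,1),(1,1,1)$ and $2$ at the types $(3,2,2),(2,2,1),(2,1,0),(3,0,0)$, in agreement with the parity count. I do not expect a genuine obstacle here: the entire content is the observation that the coverage total at each vertex is odd. The only point requiring care is the reading of $G[E_0\cup E_2]$ as an \emph{edge}-induced subgraph, so that the bound ``degree $0$ or $2$ at every vertex'' really produces a $2$-regular graph (a cycle) rather than leaving room for stray degree-$1$ or degree-$3$ vertices; the parity argument excludes odd degrees directly, which is exactly what makes the conclusion clean.
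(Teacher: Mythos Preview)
Your proof is correct and follows the same approach as the paper: both establish that every vertex of $G$ is incident with either $0$ or $2$ edges of $E_0\cup E_2$, whence $G[E_0\cup E_2]$ is either empty or $2$-regular. The paper simply appeals to the list of vertex types for this, whereas your parity argument (each $n_i$ is odd, hence the total coverage at $v$ is odd, hence an odd number of the three incident edges lie in $E_1\cup E_3$) is a clean uniform justification that subsumes the case check.
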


\begin{proof}
By type definitions, it is easy to see that every vertex is incident with either none or precisely two edges of $E_0\cup E_2$.
Therefore, $G[E_0\cup E_2]$ is either an empty graph or a cycle.
\end{proof}

Let $H$ be a graph. We denote by $|H|_{odd}$ the number of odd components of $H$.
If $J$ is a join of $H$, then $\overline{J}$ denotes the complement of $J$.

\begin{theorem}\label{thm_oddness_core}
Let $G$ be a bridgeless cubic graph and $G_c$ be a weak $k$-core with respect to three joins $J_1, J_2$ and $J_3$.
Then $\sum_{i=1}^3|\overline{J_i}|_{odd} \leq k.$
\end{theorem}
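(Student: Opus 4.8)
The plan is to work directly with the cyclic structure produced by Proposition~\ref{thm_structure of weak-core}. Taking $\overline{J_i}$ as the complement of $J_i$ relative to the core, note that every edge of $E_3$ lies in all three joins while no edge of $E_0$ lies in any of them, so
\[
\overline{J_i}=G\big[E_0\cup(E_2\setminus J_i)\big].
\]
Hence each $\overline{J_i}$ is a subgraph of the $2$-regular graph $G[E_0\cup E_2]$, which by Proposition~\ref{thm_structure of weak-core} is a disjoint union of cycles. Each edge of $E_2$ is missing from exactly one of $J_1,J_2,J_3$; colouring $E_2$ by this missing index gives $E_2=E_2^{(1)}\cup E_2^{(2)}\cup E_2^{(3)}$, and then $\overline{J_i}=G[E_0\cup E_2^{(i)}]$ is obtained from the cycles of $G[E_0\cup E_2]$ by deleting the $E_2$-edges of the other two colours. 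Thus every $\overline{J_i}$ is a union of paths and cycles, and its odd components are exactly the odd paths and odd cycles so produced.

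First I would record a global bookkeeping identity. Writing $V_i,E_i$ for the vertex and edge sets of $\overline{J_i}$, the vertex-type counts $a,\dots,g$ (together with Proposition~\ref{lem_edges_equality}) give $\sum_i|E_i|=3|E_0|+|E_2|=2f+3g+b+d$ and $\sum_i|V_i|=3f+3g+b+2d$, whence for a union of paths and cycles
\[
\sum_{i=1}^3\#\mathrm{comp}(\overline{J_i})=\sum_{i=1}^3\big(|V_i|-|E_i|+q_i\big)=(f+d)+\sum_{i=1}^3 q_i,
\]
where $q_i$ counts the cyclic components of $\overline{J_i}$. Since $\sum_i|\overline{J_i}|_{odd}=\sum_i\#\mathrm{comp}(\overline{J_i})-\sum_i\#\mathrm{even\text{-}comp}(\overline{J_i})$, and the right-hand side of the theorem is the size $k=|E_0|+\tfrac32\sum_i n(J_i)=\tfrac{f}{2}+g+\tfrac92 a+3b+\tfrac32 c+\tfrac32 d$, proving the theorem reduces to controlling $\sum_i q_i$ and exhibiting enough even components to offset the term $f+d$ against $k$.

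The heart of the argument is a parity analysis carried out cycle by cycle on $G[E_0\cup E_2]$. On one cycle the three deletions cut it into arcs at the removed $E_2$-edges, and an arc is an \emph{odd} component precisely when its two boundary positions have opposite parity; this is what forces the many even components in the alternating $(E_0,E_2)$ stretches and keeps the odd count small. The main obstacle is that this does not close locally: a triangle carrying two $E_0$-edges and one $E_2$-edge already yields three odd components across the three complements (a full odd cycle for the colour it keeps, and an odd $3$-vertex path for each of the other two), while contributing only weight $2=\tfrac{f_Z}{2}+g_Z$ to $k$. The resolution must be global, since such a triangle has a type-$g$ vertex whose $E_3$-edge reaches a vertex of type $a$, $b$, $c$ or $g$, and the deficit is paid by the $\tfrac32\sum_i n(J_i)$ part of $k$ coming from that far endpoint. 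Making this discharging precise—distributing the weight of each $E_3$-edge to the cycles it supports and checking that no vertex is overcharged—is the step I expect to be most delicate; I would organise it as an induction that removes the $E_3$-edges (equivalently, peels off the type-$a$/type-$g$ structure) one at a time, tracking both sides of the inequality at each step.
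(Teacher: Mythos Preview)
Your argument rests on a misidentification of the object being counted. In the paper, $\overline{J_i}$ is the complement of $J_i$ in the \emph{whole} cubic graph $G$: its edge set is $E(G)\setminus J_i$, which contains $E_0$, $E_2\setminus J_i$, \emph{and} every $E_1$-edge lying in $J_j\cup J_k$ for $j,k\neq i$. Your formula $\overline{J_i}=G[E_0\cup(E_2\setminus J_i)]$ is the complement taken only inside the core $G_c$, and it drops the $E_1$-part entirely. Consequently the claim that $\overline{J_i}$ is a subgraph of the cycle $G[E_0\cup E_2]$ is false, and everything built on it (the path/cycle decomposition, the component count $\sum_i(|V_i|-|E_i|+q_i)$, and the triangle analysis) concerns the wrong graph. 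For a concrete illustration, take three distinct $1$-factors of the Petersen graph: the genuine $\overline{M_1}$ is a $2$-factor consisting of two $5$-circuits (ten edges), whereas your version has only four edges. Your ``bad'' triangle is symptomatic: in the true $\overline{J_j}$ the two $E_0$-edges of that triangle are not an isolated $3$-vertex path at all---they continue through $E_1$-edges into a larger circuit of $G$, whose parity is determined globally.

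Even if one overlooks this, what you have written is a plan rather than a proof: you exhibit a local obstruction and announce a discharging/induction on $E_3$-edges without carrying it out. The paper's argument avoids all of this. Its single structural input is that every odd circuit of $\overline{J_i}$ contains either an edge of $E_0$ or a $J_k$-vertex with $k\neq i$ (a two-line parity check: otherwise $J_j$ would restrict to a perfect matching of an odd circuit). One then bounds the number of odd circuits directly by sorting $E_0$-edges into those that are the \emph{unique} $E_0$-edge on their bad circuit ($X_i$) and the rest ($Y_i$); the elementary inequalities $x\le|E_0|$ and $x+y\le 3|E_0|$ give $x+\tfrac{y}{2}\le 2|E_0|$, and adding $3\sum_j n(J_j)$ for the non-bad circuits finishes the bound. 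No cycle-by-cycle dissection of $G[E_0\cup E_2]$ and no discharging are needed.
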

\begin{proof}
Each component of the complement of $J_i$ is either an isolated vertex or a circuit.
Any odd circuit of $\overline{J_i}$ contains either one edge from $E_0$ or a $J_k$-vertex with $k\neq i$.
We call an odd circuit  of $\overline{J_i}$ \emph{bad} if it has no $J_k$-vertex for $k \neq i$.
In what follows we distinguish elements of $E_0$ according to their behavior with respect to bad circuits. We define:
$$X_i=\{e \colon\ \text{$e$ is the unique edge in } C \cap E_0 \text{ and } C \text { is a bad circuit of } \overline{J_i}   \}, \text{ for } i=1,2,3;$$
$$Y_i=\{e \colon e \in E_0 \setminus X_i \text{ and } e \in C \cap E_0 \text{ and } C \text { is a bad circuit of } \overline{J_i}   \}, \text{ for } i=1,2,3.$$
Set $x=|X_1|+|X_2|+|X_3|$ and $y=|Y_1|+|Y_2|+|Y_3|$.

Since $X_i \cap Y_i = \emptyset$, it follows that
\begin{equation}
x+y\leq 3|E_0|.
\end{equation}

Moreover, if $e \in X_i$, then $e \not \in X_j$, and $e \not \in X_k$ for $j,k \neq i$, that is
\begin{equation}
x\leq |E_0|.
\end{equation}
Combining equations (1) and (2) implies
\begin{equation}\label{eq_1}
x+\frac{y}{2}\leq 2|E_0|.
\end{equation}

Now, we are in position to prove our assertion.
Since in an odd circuit of $\overline{J_i}$ there is either a $J_k$-vertex ($k \neq i$) or an edge of $X_i$ or two edges of $Y_i$, the following relation holds:
$$|\overline{J_i}|_{odd} \leq |X_i|+\frac{|Y_i|}{2}+\sum_{i=1}^3 n(J_i).$$
Therefore, by summing up for all three joins we deduce:
$$\sum_{i=1}^3|\overline{J_i}|_{odd} \leq x+\frac{y}{2}+3\sum_{i=1}^{3}n(J_i) \leq 2|E_0|+3\sum_{i=1}^{3}n(J_i)=k,$$
where the last inequality directly follows from (\ref{eq_1}).
\end{proof}

\begin{corollary} \label{coro_womega-wmu3}
If $G$ is a bridgeless cubic graph, then $\omega'(G)\leq \frac{2}{3}\mu'_3(G).$
\end{corollary}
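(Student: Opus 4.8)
The plan is to read off the bound directly from Theorem \ref{thm_oddness_core}, so essentially all of the real work has already been carried out; what remains is a short deduction plus one piece of careful bookkeeping. First I would pick a weak core of $G$ that realizes $\mu'_3(G)$: by the definition of $\mu'_3$ there exist three joins $J_1,J_2,J_3$ whose weak core $G_c$ satisfies $|E_0|+\frac{3}{2}\sum_{i=1}^3 n(J_i)=\mu'_3(G)$. This is the configuration on which the whole argument is run.

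Next I would feed these three joins into Theorem \ref{thm_oddness_core}. The quantity actually bounded there, written out in full, is $2|E_0|+3\sum_{i=1}^3 n(J_i)$, which is exactly twice the weak-core parameter $|E_0|+\frac{3}{2}\sum_{i=1}^3 n(J_i)$. Hence the theorem yields
$$\sum_{i=1}^3|\overline{J_i}|_{odd}\leq 2\Big(|E_0|+\frac{3}{2}\sum_{i=1}^3 n(J_i)\Big)=2\mu'_3(G).$$

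Finally I would invoke the definition of the weak oddness. Since each $J_i$ is itself a join of $G$, and $\omega'(G)$ is by definition the minimum of $|\overline{J}|_{odd}$ taken over all joins $J$, we have $\omega'(G)\leq|\overline{J_i}|_{odd}$ for every $i$. Summing these three inequalities and combining with the displayed bound gives $3\,\omega'(G)\leq\sum_{i=1}^3|\overline{J_i}|_{odd}\leq 2\mu'_3(G)$, and dividing by $3$ produces the claimed inequality $\omega'(G)\leq\frac{2}{3}\mu'_3(G)$.

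The one point that needs care — and where the constant $\frac{2}{3}$ genuinely originates — is the bookkeeping in the second step: one must keep the definition of the weak-core parameter, namely $l=|E_0|+\frac{3}{2}\sum_i n(J_i)$, separate from the quantity $2|E_0|+3\sum_i n(J_i)$ that appears on the right-hand side of Theorem \ref{thm_oddness_core}, the latter being precisely $2l$. Matching these correctly is what converts the factor $3$ coming from the three summands into the stated constant $\frac{2}{3}$; every other step is immediate, so I expect no serious obstacle beyond this normalization.
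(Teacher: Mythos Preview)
Your proof is correct and follows the same route as the paper: choose joins realizing $\mu'_3(G)$, apply Theorem~\ref{thm_oddness_core}, and use the minimality of $\omega'(G)$ on each summand. Your explicit tracking of the normalization --- that the right-hand side in the proof of Theorem~\ref{thm_oddness_core} is $2|E_0|+3\sum_i n(J_i)=2l$ rather than $l$ --- is exactly what the paper uses (it writes $\leq 2\mu'_3(G)$ in the corollary), so your bookkeeping is on target.
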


\begin{proof}
Let $G_c$ be a weak $\mu_3'(G)$-core of $G$ with respect to three joins $J_1,J_2$ and $J_3$.
By Theorem \ref{thm_oddness_core}, we have $|\overline{J_1}|_{odd}+|\overline{J_2}|_{odd}+|\overline{J_3}|_{odd}\leq 2\mu'_3(G)$.
By the minimality of the weak oddness $\omega'(G)$ it follows that $\omega'(G)\leq \frac{2}{3}\mu'_3(G)$ .
\end{proof}

The following results were already obtained in \cite{JinSteffen20150105}, but now it turns out that they are just a particular case of our previous theorem.
\begin{theorem}\label{thm_odd components}
Let $G$ be a bridgeless cubic graph and $G_c$ be a $k$-core with respect to three 1-factors $M_1, M_2$ and $M_3$.
Then $\sum_{i=1}^3|\overline{M_i}|_{odd} \leq k.$
\end{theorem}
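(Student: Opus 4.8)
The plan is to obtain Theorem \ref{thm_odd components} as the special case of Theorem \ref{thm_oddness_core} in which the three joins are 1-factors; the surrounding text already signals this, so the work is to confirm that the hypotheses specialize correctly. The first observation I would record is that a 1-factor is a join in which every vertex has degree exactly $1$, so it has no $J$-vertices and hence $n(M_i)=0$ for each $i$. Taking $J_i=M_i$ therefore forces $\sum_{i=1}^3 n(J_i)=0$. Moreover, since each of $M_1,M_2,M_3$ covers exactly one of the three edges at every vertex, the three incidence-counts at a vertex sum to $3$, so only the types $(1,1,1)$, $(2,1,0)$ and $(3,0,0)$ (that is, the quantities $e,f,g$) can occur and no vertex is a $J_i$-vertex.

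Next I would identify the two notions of core. As all three chosen joins are 1-factors, precisely $0$ of them fail to be a 1-factor, so the weak core with respect to $M_1,M_2,M_3$ is exactly a $0$-weak core, which is by definition a core. Substituting $\sum_i n(J_i)=0$ into the defining parameter shows that the quantity labelling this object as a $k$-core is the very same $k$ that labels it as a weak $k$-core in Theorem \ref{thm_oddness_core}. Thus the hypothesis ``$G_c$ is a $k$-core with respect to the 1-factors $M_1,M_2,M_3$'' is literally the hypothesis of Theorem \ref{thm_oddness_core} applied to $J_i=M_i$, with the same value of $k$.

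Granting these identifications, the conclusion follows at once: Theorem \ref{thm_oddness_core} yields $\sum_{i=1}^3|\overline{J_i}|_{odd}\le k$, and replacing $J_i$ by $M_i$ gives $\sum_{i=1}^3|\overline{M_i}|_{odd}\le k$, which is the assertion. To align the statement with the original formulation in \cite{JinSteffen20150105}, I would also note that the complement of a 1-factor in a cubic graph is a 2-factor, so each component of $\overline{M_i}$ is a circuit and $|\overline{M_i}|_{odd}$ counts precisely the odd circuits of that 2-factor.

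I do not expect a genuine obstacle; the entire content is the bookkeeping between the two definitions. The single point to handle with care is verifying that setting $\sum_i n(J_i)=0$ turns the weak-core datum into the core datum with an unchanged parameter, and that the vertex types collapse to $e,f,g$, so that Theorem \ref{thm_oddness_core} applies verbatim. Anyone preferring a self-contained derivation could instead rerun the proof of Theorem \ref{thm_oddness_core} under the simplifications $a=b=c=d=0$, $|E_2|=\tfrac{f}{2}$ and $|E_3|=\tfrac{g}{2}$, but this would only reprove the special case of an already-established inequality.
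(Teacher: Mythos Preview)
Your proposal is correct and coincides exactly with the paper's own treatment: the paper does not give a separate argument but simply remarks that Theorem~\ref{thm_odd components} is a particular case of Theorem~\ref{thm_oddness_core}, and your verification that $n(M_i)=0$ forces the weak-core parameter to agree with the core parameter is precisely the bookkeeping needed to justify that remark.
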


\begin{corollary} \label{coro_womega-wmu3}
If $G$ is a bridgeless cubic graph, then $\omega(G)\leq \frac{2}{3}\mu_3(G).$
\end{corollary}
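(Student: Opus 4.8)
The plan is to mirror the proof of the weak-oddness corollary, replacing each weak ingredient by its genuine 1-factor counterpart. First I would fix three 1-factors $M_1,M_2,M_3$ of $G$ whose core is minimizing, i.e.\ one for which the associated parameter $l=|E_0|$ equals $\mu_3(G)$; such a choice exists by the definition of $\mu_3(G)$. Since each $M_i$ is a 1-factor, it has no $M_i$-vertices, so $n(M_i)=0$ and the core is genuinely a core (a $0$-weak core in the sense of the excerpt), with $l=|E_0|=\mu_3(G)$.

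Next I would apply Theorem~\ref{thm_odd components} to this core. Exactly as in the weak case, the governing bound evaluates to $2|E_0|+3\sum_i n(M_i)=2|E_0|=2\mu_3(G)$, so the theorem yields $\sum_{i=1}^3|\overline{M_i}|_{odd}\le 2\mu_3(G)$.

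The only genuinely new point—and the single place where this differs from the weak version—is the interpretation of the left-hand side. Because $M_i$ is a perfect matching of the cubic graph $G$, every vertex has degree $2$ in $\overline{M_i}$, so $\overline{M_i}$ is a $2$-factor; its components are circuits, and its odd components are precisely its odd circuits. By the definition of the oddness $\omega(G)$ as the minimum number of odd circuits over all $2$-factors, each term then satisfies $|\overline{M_i}|_{odd}\ge\omega(G)$.

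Combining the two estimates gives $3\,\omega(G)\le\sum_{i=1}^3|\overline{M_i}|_{odd}\le 2\mu_3(G)$, and hence $\omega(G)\le\frac{2}{3}\mu_3(G)$. I expect no real obstacle here, since all the substantive work is carried by Theorem~\ref{thm_odd components}; the only thing to handle with care is correctly identifying $\overline{M_i}$ as a $2$-factor, so that the genuine oddness (rather than the weak oddness, as in Corollary~\ref{coro_womega-wmu3} above) appears on the left.
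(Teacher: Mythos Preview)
Your proof is correct and follows exactly the same approach as the paper's: choose three 1-factors realizing $\mu_3(G)$, apply Theorem~\ref{thm_odd components} to get $\sum_{i}|\overline{M_i}|_{odd}\le 2\mu_3(G)$, and then use that each $\overline{M_i}$ is a 2-factor so $|\overline{M_i}|_{odd}\ge\omega(G)$. You simply spell out in more detail why the complements are genuine 2-factors and why $l=|E_0|$ when all $n(M_i)=0$, but the argument is identical.
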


\begin{proof}
Let $G_c$ be a $\mu_3(G)$-core of $G$ with respect to three 1-factors $M_1,M_2$ and $M_3$.
By Theorem \ref{thm_odd components}, we have $|\overline{M_1}|_{odd}+|\overline{M_2}|_{odd}+|\overline{M_3}|_{odd}\leq 2\mu_3(G)$.
By the minimality of $\omega(G)$, it follows that $\omega(G)\leq \frac{2}{3}\mu_3(G)$.
\end{proof}

\begin{comment}
\begin{corollary}\label{thm_weak_oddness_core}
Let $G$ be a bridgeless cubic graph. If $G_c$ is a weak $k$-core of $G$ with respect to one join $J$ and two 1-factors $M_1, M_2$,
then $|\overline{J}|_{odd}+|\overline{M_1}|_{odd}+|\overline{M_2}|_{odd}\leq 2k.$
\end{corollary}
\begin{proof}
It is a particular case of previous theorem.
\end{proof}

\begin{corollary} \label{omega-mu3}
If $G$ is a bridgeless cubic graph, then $\omega'(G)+2\omega(G)\leq 2\mu'_3(G)$ and furthermore, $\omega'(G)\leq \frac{2}{3}\mu'_3(G).$
\end{corollary}
\end{comment}

\section{Equivalent Statements}
Let $G_1$ and $G_2$ be two bridgeless graphs, $e_1$ and $e_2$ be two edges such that $e_1=u_1v_1\in E(G_1)$ and $e_2=u_2v_2\in E(G_2)$.
The \emph{2-cut connection} on $\{e_1,e_2\}$ is a graph operation that consists of deleting edges $e_1$ and $e_2$ and adding two new edges $u_1u_2$ and $v_1v_2$.
Clearly, the graph obtained from $G_1$ and $G_2$ by applying 2-cut connection is also bridgeless.

\begin{theorem}\label{thm_equivalence}
The following three statements are equivalent:
\begin{enumerate}[(1)]
  \item (Conjecture \ref{conj_5Fano}) Every bridgeless cubic graph has a 5-line Fano-flow.
  \item (Conjecture \ref{conj_joinFR}) Every bridgeless cubic graph has a join $J$ and two 1-factors $M_1$ and $M_2$ such that $J\cap M_1\cap M_2=\emptyset.$
  \item Every bridgeless cubic graph has a cyclic 1-weak core.
  \item Every bridgeless cubic graph has a triangle-free simple 1-weak core.
\end{enumerate}
\end{theorem}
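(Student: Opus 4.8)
The plan is to establish the cycle of implications $(3)\Rightarrow(2)$, $(2)\Rightarrow(3)$ and the equivalence $(3)\Leftrightarrow(4)$, and to dispose first of the announced equivalence $(1)\Leftrightarrow(2)$. For the latter I would imitate M\'{a}\v{c}ajov\'{a} and \v{S}koviera: to three joins $J_1,J_2,J_3$ of $G$ assign the $\mathbb{Z}_2^3$-coloring $c(e)=(\chi_{\overline{J_1}}(e),\chi_{\overline{J_2}}(e),\chi_{\overline{J_3}}(e))$. Each $\overline{J_i}$ is an even subgraph, so $c$ is a $\mathbb{Z}_2^3$-flow, and $c(e)=0$ holds exactly for $e\in E_3$; thus $c$ is nowhere-zero if and only if $E_3=\emptyset$. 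The lines of $\mathcal{F}_7$ appearing at a vertex are determined by its type, and a short count shows that demanding two of the three joins to be $1$-factors forces at most five lines, while a $5$-line Fano-flow decodes into two $1$-factors and a join with $E_3=\emptyset$. This is the verbatim analogue of the Fan--Raspaud case ($4$ lines), so I would only record the bookkeeping.

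The backbone is a structural dictionary. By Proposition \ref{thm_structure of weak-core}, $G[E_0\cup E_2]$ is empty or a cycle, so the sole obstruction to a weak core being cyclic lies in $E_3$. The $G_c$-degrees of the seven vertex types $a,\dots,g$ are $3,3,1,2,0,2,3$ respectively; hence $G_c$ is $2$-regular precisely when no type carrying a thrice-counted edge occurs, that is, precisely when $E_3=\emptyset$. In a $1$-weak core the two $1$-factors create no $J$-vertices, so types $a,b$ cannot occur and the $J$-vertices are exactly those of types $c$ and $d$; consequently the core is simple if and only if the vertices of types $c$ and $d$ induce no circuit. With this dictionary $(3)\Rightarrow(2)$ is immediate: a cyclic $1$-weak core has $E_3=\emptyset$, so its two $1$-factors $M_1,M_2$ and its join $J$ satisfy $J\cap M_1\cap M_2=\emptyset$, which is Conjecture \ref{conj_joinFR}.

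For $(2)\Rightarrow(3)$ I would begin with a join $J$ and two $1$-factors $M_1,M_2$ with $J\cap M_1\cap M_2=\emptyset$; then $E_3=\emptyset$ and $G_c=G[E_0\cup E_2]$ is already cyclic. The only issue is to realise it as a $1$-weak core, i.e. to arrange that $J$ is not a $1$-factor. If $G$ is $3$-edge-colorable this is trivial; otherwise the core is non-empty, so $E_0\neq\emptyset$, and I would thicken one $1$-factor $M_i$ by a nonempty even subgraph $C$ that is edge-disjoint from $M_i$ and avoids the opposite pairwise intersection, so that $M_i\cup C$ is a join that is not a $1$-factor and still leaves $E_3$ empty. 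Guaranteeing such a $C$ is exactly where the $2$-cut connection is used: attaching a suitable gadget along an $E_0$-edge forces the required cycle and the resulting configuration pulls back to $G$.

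The heart of the theorem is $(3)\Leftrightarrow(4)$, and here the two sides are genuinely different, since a cyclic $1$-weak core may contain triangles and its type-$d$ vertices may form circuits, while a triangle-free simple $1$-weak core may still carry edges of $E_3$ (types $c$ and $g$) and hence fail to be cyclic. For $(3)\Rightarrow(4)$ I would delete the triangles of the core by $2$-cut connections opening each triangle into a longer cycle, and re-route the join along the core to destroy any remaining circuit of $J$-vertices, thereby producing a triangle-free simple core. The reverse $(4)\Rightarrow(3)$ is the step I expect to be the main obstacle: starting from a triangle-free simple $1$-weak core I must eliminate $E_3$. I would argue by minimality, choosing among all triangle-free simple $1$-weak cores of $G$ one with $|E_3|$ smallest (a quantity controlled through Proposition \ref{lem_edges_equality}); if an edge $e\in E_3$ survived, I would remove $e$ from all three joins and repair the parities by flipping along an alternating path, the triangle-free and simplicity hypotheses being precisely what is needed to ensure that such a path exists and that the exchange strictly lowers $|E_3|$ without creating new triangles or destroying simplicity. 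Making this exchange controlled---so that every side effect is ruled out by the two hypotheses---is where the real work lies.
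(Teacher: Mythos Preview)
Your cycle of implications and the placement of the main construction are both off, and this is where the argument breaks.

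The paper proves the chain $(2)\Rightarrow(3)\Rightarrow(4)\Rightarrow(2)$. The direction $(2)\Rightarrow(3)$ is a one-liner from Proposition~\ref{thm_structure of weak-core}: if $J\cap M_1\cap M_2=\emptyset$ then $E_3=\emptyset$ and the weak core is cyclic (your worry about the ``precisely one join that is not a $1$-factor'' clause is legitimate but peripheral, and in any case you cannot repair it with a $2$-cut connection, since that changes the graph, not the core). The direction $(3)\Rightarrow(4)$ is handled entirely inside $G$ by choosing a cyclic $1$-weak core with \emph{fewest edges}: first, if the type-$d$ vertices induce a circuit $C$, replace $J$ by $J\setminus E(C)$ to get a strictly smaller cyclic core; once simple, any triangle in the core must have two type-$(2,1,0)$ vertices and one type-$(2,2,1)$ vertex $z$, and swapping the two edges of $J$ at $z$ inside the triangle again shrinks the core. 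No gadgets are needed here.

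The step you identify as the main obstacle, $(4)\Rightarrow(3)$, is not proved directly at all. Instead one proves $(4)\Rightarrow(2)$ via the $K_4$-gadget construction that you tried to deploy elsewhere: replace every edge $e_i$ of $G$ by a $2$-cut connection with a copy $T_i$ of $K_4$, obtaining $G'$; apply $(4)$ to $G'$; then pull back the two $1$-factors and the simple join to $G$ via $con(\cdot)$. If some $e_i$ were in all three pullbacks, then $e_i',e_i''\in M_1\cap M_2\cap J$ in $G'$, which forces inside $T_i$ either a triangle of the weak core or a $4$-circuit of $J$-vertices, contradicting triangle-free and simple. Your plan to stay inside $G$ and reduce $|E_3|$ by alternating-path swaps has no mechanism: triangle-freeness and simplicity impose only local constraints around $J$-vertices and do nothing to guarantee an alternating path from an $E_3$-edge, nor to prevent such a swap from creating new triangles or new $J$-circuits. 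The $K_4$ gadget is precisely what converts the hypotheses of $(4)$ into the conclusion $E_3=\emptyset$, and it has to be applied to an auxiliary graph, not to $G$ itself.
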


\begin{proof} The equivalence of statements (1) and (2) follows easily from the results in \cite{MacajovaSkoviera2005112}.

(2) $\rightarrow$ (3): By Proposition \ref{thm_structure of weak-core}, the 1-weak core with respect to $M_1,M_2$ and $J$ is cyclic.
Therefore, statement (2) implies statement (3).

(3) $\rightarrow$ (4): Suppose to the contrary that there is a bridgeless cubic graph $G$ that has no triangle-free simple 1-weak core. By (3), $G$ has a cyclic 1-weak core, and let $G_c$ be a cyclic 1-weak core of $G$ with fewest edges. Let $G_c$ be with respect to two 1-factors $M_1,M_2$ and a join $J$.
We claim that $G_c$ is simple. Otherwise, $J$ is not simple, that is, $G$ contains a circuit $C$ such that each vertex of $C$ is a $J$-vertex. Recall that $G_c$ is cyclic, by type definitions according to $M_1,M_2$ and $J$, every vertex of $C$ has type $(2,2,1)$. Let $J_1$ be obtained from join $J$ by removing all the edges of $C$. Thus $J_1$ is also a join of $G$. The 1-weak core with respect to $M_1,M_2$ and $J_1$ is cyclic and has fewer edge than $G_c$, a contradiction. This completes the proof of the claim.

By our supposition and the previous claim, $G_c$ has a triangle $[xyz]$.
It follows that two of vertices $x,y$ and $z$ have type $(2,1,0)$ and the last one has type $(2,2,1)$, which is the only possible case.
Without loss of generality we assume that $z$ is of type $(2,2,1)$.
Set $J_2=J\cup \{xy\}\setminus \{xz,yz\}.$ Clearly, $J_2$ is a join of $G$.
Now the 1-weak core with respect to $M_1,M_2$ and $J_2$ is cyclic and has fewer edges than $G_c$, a contradiction.
Therefore, statement (3) implies statement (4).

(4) $\rightarrow$ (2): Let $G$ be a bridgeless cubic graph with edge set $\{e_1,\ldots,e_m\}$.
Take $m$ copies $T_1,\ldots, T_m$ of the complete graph $K_4$.
For $i\in \{1,\ldots,m\}$, apply the 2-cut connection on $e_i$ and an edge of $T_i$, and let $e_i'$ and $e_i''$ be the two added new edges.
The resulting graph $G'$ is bridgeless and cubic.
By (4), $G'$ has a triangle-free simple 1-weak core $H$.
Let $H$ be with respect to two 1-factors $M_1,M_2$ and a simple join $J$.
For every join $F$ of $G'$, since $F$ contains either both of $e'_i$ and $e''_i$ or none of them for $i\in \{1,\dots,m\}$,
let $con(F)=\{e\colon\ e=e_i\in E(G)$, and $e_i',e_i''\in F\}$. Clearly, $con(F)$ is a join of $G$ and in particular, $con(F)$ is a 1-factor of $G$ if $F$ is a 1-factor of $G'$. We claim that $con(M_1)\cap con(M_2)\cap con(J)=\emptyset$ and hence, statement (2) holds.
Suppose to the contrary that $G$ has an edge $e_1$ contained in all of $con(M_1), con(M_2)$ and $con(J)$. It follows that $e_1',e_1''\in M_1\cap M_2\cap J$, and hence
one can easily deduce that in copy $T_1$, the 1-weak core $H$ contains either a triangle or a circuit of length 4 whose vertices are $J$-vertices, a contradiction. Therefore, statement (4) implies statement (2).
\end{proof}

\begin{theorem}
The following four statements are equivalent:
\begin{enumerate}[(1)]
  \item (Conjecture \ref{conj_4Fano}) Every bridgeless cubic graph has a 4-line Fano-flow.
  \item (Conjecture \ref{conj_FanRaspaud}) Every bridgeless cubic graph has three 1-factors $M_1,M_2,M_3$ such that $M_1\cap M_2\cap M_3=\emptyset.$
  \item (Conjecture \ref{conj_bipartite core}) Every bridgeless cubic graph has a bipartite core.
  \item (Conjecture \ref{conj_trianglefree}) Every bridgeless cubic graph has a triangle-free core.
  \item (Conjecture \ref{conj_3acyclic}) Every bridgeless cubic graph has three 1-factors such that the complement of their union is an acyclic graph.
\end{enumerate}
\end{theorem}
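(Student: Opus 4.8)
The plan is to prove all the equivalences through the cycle of implications $(2)\Rightarrow(3)\Rightarrow(4)\Rightarrow(2)$ together with $(2)\Rightarrow(5)\Rightarrow(2)$, and to invoke the equivalence $(1)\Leftrightarrow(2)$ established by M\'{a}\v{c}ajov\'{a} and \v{S}koviera \cite{MacajovaSkoviera2005112}. The whole argument rests on one observation: a core is a $0$-weak core, i.e.\ it arises from three \emph{1-factors} $M_1,M_2,M_3$, so no vertex is a $J_i$-vertex and every vertex has one of the types $(1,1,1)$, $(2,1,0)$, $(3,0,0)$. Since $E_3=M_1\cap M_2\cap M_3$ as edge sets, $E_3=\emptyset$ holds exactly when $M_1\cap M_2\cap M_3=\emptyset$, and the core equals $G[E_0\cup E_2\cup E_3]$.

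For the forward (easy) implications I would first treat $(2)\Rightarrow(3)$. If $M_1\cap M_2\cap M_3=\emptyset$ then $E_3=\emptyset$, so no vertex has type $(3,0,0)$ and every vertex of the core has type $(2,1,0)$; at such a vertex exactly one incident core edge lies in $E_0$ and one in $E_2$. By Proposition \ref{thm_structure of weak-core} the core is a cycle, and since its edges alternate between $E_0$ and $E_2$ along every circuit, all circuits are even; hence the core is bipartite. Then $(3)\Rightarrow(4)$ is immediate, a bipartite graph being triangle-free. Finally, for $(2)\Rightarrow(5)$ I would use the same three 1-factors: the complement of $M_1\cup M_2\cup M_3$ is precisely $E_0$, and since each core vertex carries exactly one $E_0$-edge, the edges of $E_0$ are pairwise non-adjacent, so $G[E_0]$ is a matching and in particular acyclic.

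The substantial part of the proof, and the expected main obstacle, is closing the cycle with the two backward implications $(4)\Rightarrow(2)$ and $(5)\Rightarrow(2)$; here I would reuse the gadget of Theorem \ref{thm_equivalence}. Given an arbitrary bridgeless cubic graph $G$, replace each edge $e_i$ by a copy $T_i$ of $K_4$ via a 2-cut connection, obtaining a bridgeless cubic graph $G'$; let $T_i$ have vertex set $\{a_i,b_i,c_i,d_i\}$ where the 2-cut connection deletes the edge $a_ib_i$, so that $a_i,b_i$ are the two attachment vertices. For a 1-factor $F$ of $G'$ let $con(F)$ be the associated 1-factor of $G$, as before. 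Apply the hypothesis to $G'$ to get three 1-factors $M_1',M_2',M_3'$, and suppose some edge $e_i$ lies in $con(M_1')\cap con(M_2')\cap con(M_3')$. Then both attachment edges of $T_i$ belong to all three $M_j'$, so $a_i$ and $b_i$ are matched outside $T_i$ in each $M_j'$; this forces $c_id_i\in M_j'$ for all $j$ (hence $c_id_i\in E_3$) and leaves $a_ic_i,a_id_i,b_ic_i,b_id_i$ in no matching (hence in $E_0$). Under hypothesis $(4)$ the edges $a_ic_i,c_id_i,a_id_i$ then form a triangle in the core, contradicting triangle-freeness; under hypothesis $(5)$ the four $E_0$-edges form the $4$-circuit $a_ic_ib_id_i$ in the complement of the union, contradicting acyclicity. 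Hence $con(M_1')\cap con(M_2')\cap con(M_3')=\emptyset$, which is statement $(2)$ for $G$.

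Together with $(1)\Leftrightarrow(2)$ these implications make the directed graph on $\{(1),\dots,(5)\}$ strongly connected, so all five statements are equivalent. I expect the only delicate point to be the bookkeeping inside $T_i$ in the backward step — verifying that a shared edge really forces the $E_3/E_0$ pattern above — but this is exactly the computation already carried out for the single-join gadget in Theorem \ref{thm_equivalence}, here simplified by the fact that all three objects are genuine 1-factors.
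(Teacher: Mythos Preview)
Your proof is correct and follows essentially the same approach as the paper: the forward implications from $(2)$ use the alternating $E_0$/$E_2$ structure of the cyclic core, and the backward implications $(4)\Rightarrow(2)$ and $(5)\Rightarrow(2)$ use the identical $K_4$-gadget construction, with the same analysis that a triply covered edge forces $c_id_i\in E_3$ and the four remaining edges into $E_0$, yielding the triangle in the core and the $4$-circuit in $G[E_0]$. The only differences are cosmetic: you make the vertex labelling inside $T_i$ explicit and organize the implications as two cycles through $(2)$ rather than the paper's fan $(2)\Rightarrow\{(3),(4),(5)\}\Rightarrow(2)$.
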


\begin{proof}
 The equivalence of statements (1) and (2) is proved in \cite{MacajovaSkoviera2005112}.

If statement (2) holds, then by Proposition \ref{thm_structure of weak-core}, the core $G_c$ of a bridgeless cubic graph $G$ with respect to $M_1,M_2,M_3$ is cyclic. More precisely, each circuit in $G_c$ contains edges from $E_0$ and $E_2$ alternate in cyclic order. Hence, the core $G_c$ is bipartite and triangle-free, and $G[E_0]$ is an acyclic graph. Hence statement (2) implies all of the statements (3), (4) and (5).

Let $G$ be a bridgeless cubic graph with edge set $\{e_1,\ldots,e_m\}$.
Take $m$ copies $T_1,\ldots, T_m$ of the complete graph $K_4$.
For $i\in \{1,\ldots,m\}$, apply 2-cut connection on $e_i$ and an edge of $T_i$, and let $e_i'$ and $e_i''$ the two added new edges.
Let $G'$ be the resulting graph, which is bridgeless and cubic.
Let $H$ be a core of $G'$ with respect to three 1-factors $M_1,M_2,M_3$.
For every 1-factor $F$ of $G'$, since $F$ contains either both of $e'_i$ and $e''_i$ or none of them for $i\in \{1,\dots,m\}$,
we can let $con(F)=\{e\colon\ e=e_i\in E(G)$, and $e_i',e_i''\in F\}$. Clearly, $con(F)$ is a 1-factor of $G$.
We claim that if $H$ is either bipartite or triangle-free or if the complement of the union of $M_1,M_2,M_3$ is acyclic, then $con(M_1),con(M_2)$ and $con(M_3)$ have empty intersection. This claim completes the proof.
Suppose to the contrary that $G$ has an edge $e_1$ such that $e_1\in con(M_1)\cap con(M_2)\cap con(M_3)$. It follows that $e_1',e_1''\in M_1\cap M_2\cap M_3$. Hence in copy $T_1$, core $H$ contains triangles and $G[E_0]$ contains a circuit of length 4, a contradiction with the supposition of our claim.
\end{proof}

\section{Counterexample to Conjecture \ref{conj_2acyclic}}

If the Fan-Raspaud Conjecture is true, then every bridgeless cubic graph has two 1-factors, say $M_1$ and $M_2$, with no odd edge-cut in their intersection; in particular, the complement of $M_1 \cup M_2$ is a bipartite graph which is union of paths and even circuits.
One could asks if even circuits could be forbidden in such bipartite graph. It is verified to be true for all snarks of order at most $34$ and proposed as a conjecture in \cite{Mazzuoccolo2013235}.

Here, we disprove Conjecture \ref{conj_2acyclic} in its present formulation by using the same technique already used in the proof of Theorem \ref{thm_equivalence}.

Let $P$ be the Petersen graph and let $\{e_1,\ldots,e_{15}\}$ be its edge-set. Take $15$ copies $T_1,\ldots,T_{15}$ of the complete graph $K_4$. For $i \in \{1,\ldots,15\}$, apply a $2$-cut connection on $e_i$ and an arbitrary edge of $T_i$. Denote by $G$ the graph obtained.
Let $M_1$ and $M_2$ be two 1-factors of $G$, and let $con(M_1)$ and $con(M_2)$ be the two corresponding 1-factors of $P$, respectively.
Since every pair of 1-factors of $P$ has exactly an edge in common, without loss of generality we can assume $\{e_1\}=con(M_1) \cap con(M_2)$. Hence, $T_1$ has an edge covered twice and a $4$-circuit uncovered, that is the complement of $M_1 \cup M_2$ is not acyclic.

We would like to stress that the graph $G$ has a lot of 2-edge-cuts, so we wonder if an analogous version of Conjecture \ref{conj_2acyclic} could hold true for $3$-connected or cyclically $4$-edge-connected cubic graphs.

\end{document}